\newcommand{\Z}{\mathbb{Z}}
\newcommand{\pres}[2]{\langle {#1}\ |\ {#2} \rangle}
\newcommand{\gpres}[1]{\langle {#1}\rangle}
\newcommand{\npres}[1]{\langle\langle {#1}\rangle\rangle}
\newtheorem{theorem}{Theorem}
\newtheorem{lemma}[theorem]{Lemma}
\newtheorem{maintheorem}{Theorem}
\begin{document}
\title{Fibonacci type presentations and 3-manifolds}%
\author{James Howie and Gerald Williams\thanks{Part of this research was carried out during a visit by the second named author to the Department of Mathematics at Heriot Watt
University in January and February~2016. That visit was financed by the Edinburgh Mathematical Society Research Support Fund. The authors
would like to thank the EMS for its support and the second named author would like to thank the Department of Mathematics at Heriot Watt University for its hospitality during that visit. }}

\maketitle

\begin{abstract}
We study the cyclic presentations with relators of the form $x_ix_{i+m}x_{i+k}^{-1}$ and the groups they define. These ``groups of Fibonacci type'' were introduced by Johnson and Mawdesley and they generalize the Fibonacci groups $F(2,n)$ and the Sieradski groups $S(2,n)$. With the exception of two groups, we classify when these groups are fundamental groups of 3-manifolds, and it turns out that only Fibonacci, Sieradski, and cyclic groups arise. Using this classification, we completely classify the presentations that are spines of 3-manifolds, answering a question of Cavicchioli, Hegenbarth, and Repov\v{s}. When $n$ is even the groups $F(2,n),S(2,n)$ admit alternative cyclic presentations on $n/2$ generators. We show that these alternative presentations also arise as spines of 3-manifolds.
\end{abstract}

\medskip

\noindent \textbf{Keywords:} Fibonacci group, Sieradski group, cyclically presented group, \linebreak 3-manifold group, spine of a manifold.

\noindent \textbf{MSCs:} 20F05, 57M05 (primary), 57M50 (secondary).\\

\section{Introduction}\label{sec:intro}

Let $F_n$ be the free group of rank~$n\geq 1$ with generators $x_0,\ldots , x_{n-1}$ and let $w=w(x_0,\ldots ,x_{n-1})$ be a word in $F_n$ and let $\theta: F_n\rightarrow F_n$ be an automorphism given by $x_i\mapsto x_{i+1}$ (subscripts mod~$n$). The presentation
\begin{alignat}{1}
\mathcal{G}_n(w)=\pres{x_0,\ldots ,x_{n-1}}{\theta^i(w)\ (0\leq i\leq n-1)}\label{eq:cyclicpres}
\end{alignat}
is called a \em cyclic presentation\em, and the group $G_n(w)$ that it defines is a \em cyclically presented group. \em
Since cyclic presentations are balanced (that is, they have an equal number of generators and relations) the questions as to which cyclic presentations are spines of 3-manifolds and which cyclically presented groups are fundamental groups of 3-manifolds arise and have been considered by many authors~(see, for example, \cite{CHR}). For instance, in~\cite{Dunwoody}, Dunwoody provides an algorithm to determine if a cyclic presentation is the spine of a 3-manifold. There is a large body of literature showing that certain cyclic presentations arise as spines of 3-manifolds, which hence define fundamental groups of 3-manifolds (see, for example, \cite{CRSTopProp},\cite{KimKimGenDun} and the references therein). Conversely, many theorems show that certain cyclic presentations do not define the fundamental groups of hyperbolic 3-orbifolds (in particular 3-manifolds) of finite volume (see, for example, Theorem~3.1 of any of \cite{Maclachlan},\cite{BV},\cite{CRS},\cite{CRSTopProp}, or see~\cite{SV}). We contribute to this theory by studying one three-parameter family in detail, namely the cyclic presentations $\mathcal{G}_n(m,k)=\mathcal{G}_n(x_0x_mx_k^{-1})$ (where $n\geq 1$, $0\leq m,k\leq n-1$) and the groups $G_n(m,k)$ they define.

The groups $G_n(m,k)$ are known as the \em groups of Fibonacci type \em $G_n(m,k)$ and were introduced in~\cite{JohnsonMawdesley}. Their presentations $\mathcal{G}_n(m,k)$ generalize the presentations $\mathcal{F}(2,n)=\mathcal{G}_n(1,2)$ of the \em Fibonacci groups \em $F(2,n)$ and the presentations $\mathcal{S}(2,n)=\mathcal{G}_n(2,1)$ of the \em Sieradski groups \em $S(2,n)$ of~\cite{Sieradski} and the presentations $\mathcal{H}(n,m)=\mathcal{G}_n(m,1)$ of the groups $H(n,m)$ studied in~\cite{GilbertHowie}. Together with the cyclic presentations $\mathcal{G}_n(x_0x_kx_l)$ studied in~\cite{CRS},\cite{EdjvetWilliams} they form the triangular cyclic presentations -- that is, cyclic presentations where the relators have length three. The groups $G_n(m,k)$ have been studied for their algebraic properties in~\cite{BV},\cite{COS},\cite{GilbertHowie},\cite{HowieWilliams},\cite{JohnsonMawdesley},\cite{W},\cite{W2} -- see~\cite{WilliamsRevisited} for a survey of such results. In this article we study geometric and topological aspects of the groups $G_n(m,k)$ and their presentations.

Our starting point is two results concerning the Fibonacci and Sieradski cases. The first is that if $n\geq 2$ is even then
$\mathcal{F}(2,n)$ is the spine of a closed 3-manifold and hence $F(2,n)$ is the fundamental group of a closed 3-manifold~\cite{HMK},\cite{HLM1},\cite{HLM2},\cite{CavicchioliSpaggiari} -- see Theorem~\ref{thm:evenFibonacciSpine}. The second is that if $n\geq 2$ then the presentation $\mathcal{S}(2,n)$ is the spine of a closed 3-manifold and hence $S(2,n)$ is the fundamental group of a closed 3-manifold~\cite{Sieradski},\cite{CHK} -- see Theorem~\ref{thm:SieradskiSpine}. Motivated by these results, in~\cite[Problem~6]{CHR} Cavicchioli, Hegenbarth and Repov\v{s} asked which presentations $\mathcal{G}_n(m,k)$ are spines of closed 3-manifolds. Related to this is the question as to which groups $G_n(m,k)$ are fundamental groups of closed 3-manifolds. With the exception of the (known to be challenging) groups $H(9,4),H(9,7)$ we answer the second question in Theorem~\ref{mainthm:CHRmanifoldgroup}, and this allows us to completely answer the first question in Theorem~\ref{mainthm:CHRspines}.

These theorems show that the only cases where spines are possible are in the cases of the Fibonacci and Sieradski presentations of Theorems~\ref{thm:evenFibonacciSpine},\ref{thm:SieradskiSpine}, and the only cases where the group is the fundamental group of a 3-manifold are the cases of the corresponding Fibonacci and Sieradski groups and when it is a finite cyclic group. Contained within this classification is the result (Theorem~\ref{thm:F2oddNot3mfdgp}) that for odd $n\geq 3$ the group $F(2,n)$ is the fundamental group of a 3-manifold only when $n=3,5,$ or $7$ (in which case it is cyclic). It has previously been observed (\cite[page~55]{CavicchioliSpaggiari}) that the presentation $\mathcal{F}(2,3)$ is the spine of a closed 3-manifold and the presentations $\mathcal{F}(2,5),\mathcal{F}(2,7)$ are not spines of 3-manifolds and so we can conclude that $\mathcal{F}(2,n)$ is the spine of a manifold if and only if $n=3$ or $n$ is even, confirming the expectation expressed in~\cite[page~55]{CavicchioliSpaggiari}.

An alternative presentation of the Fibonacci group $F(2,2m)$ was obtained in~\cite[Example~1.2]{KimVesnin97}, namely the presentation $\mathcal{G}_m(x_0^{-1}x_{1}^{2}x_{2}^{-1}x_{1})$; in the same way we can obtain the alternative presentation $\mathcal{G}_m(x_0x_1^2x_2x_1^{-1})$ of the Sieradski group $S(2,2m)$. In Theorem~\ref{mainthm:alternativepresspines} we prove that these alternative presentations are also spines of 3-manifolds.

\section{3-manifold groups}

By a \em 3-manifold group \em we mean the fundamental group of a (not necessarily closed, compact, or orientable) 3-manifold. A 2-dimensional subpolyhedron $P$ of a compact, connected, 3-manifold with boundary $M$ is called a \em spine \em of $M$ if $M$ collapses to $P$. A 2-dimensional subpolyhedron $P$ of a closed, connected, 3-manifold $M$ is called a \em spine \em of $M$ if $M\backslash \mathrm{Int} B^3$ collapses to $P$, where $B^3$ is a 3-ball in~$M$. (See, for example, \cite{Matveev07}.)
We shall say that a finite presentation $\mathcal{P}$ is a \em 3-manifold spine \em if its presentation complex $K_\mathcal{P}$ is a spine of a (not necessarily closed, compact, or orientable) 3-manifold. Clearly if $\mathcal{P}$ is a 3-manifold spine then the group it defines is a 3-manifold group. In this section we classify which groups $G_n(m,k)$ are 3-manifold groups.

The following theorems assert that if $n\geq 2$ is even then the Fibonacci group $F(2,n)$ is a 3-manifold group and its presentation $\mathcal{F}(2,n)$ is a 3-manifold spine and that if $n\geq 2$ then the Sieradski group $S(2,n)$ is a 3-manifold group and its presentation $\mathcal{S}(2,n)$ is a 3-manifold spine. Using new methods these theorems have recently been reproved in~\cite[Theorem~5.4]{CFLPP}, where some history of these results is also described.

\begin{theorem}[({\cite{HMK},\cite{HLM1},\cite{HLM2},\cite{CavicchioliSpaggiari}})]\label{thm:evenFibonacciSpine}
For each even $n\geq 2$ the presentation complex of $\mathcal{F}(2,n)$ is the spine of a closed 3-manifold. Moreover, it is the spine of the $n/2$-fold cyclic cover of $S^3$ branched over the figure eight knot, and this is the unique closed prime 3-manifold for which $\mathcal{F}(2,n)$ is a spine.
\end{theorem}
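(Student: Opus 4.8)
The plan is to produce the manifold by hand as a branched cyclic cover, to read the claimed spine off a polyhedral description of it, and then to settle uniqueness by rigidity. Throughout write $n=2h$, so that the target is the $h$-fold cyclic cover $M_h$ of $S^3$ branched over the figure eight knot $4_1$.

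First I would build $M_h$ concretely from its cyclic symmetry. It carries an order-$h$ group of deck transformations with quotient the orbifold $(S^3,4_1)$, and the branch locus $C\subset M_h$ lying over the knot is a single circle fixed by the rotation. Choosing a fundamental domain for the deck action gives a polyhedral block, and reassembling its $h$ rotated copies exhibits $M_h$ as the identification space of a single $3$-ball $B$, with $\partial B$ tessellated and its faces paired cyclically. Taking $K$ to be the image of $\partial B$ (the $2$-skeleton), $K$ is a spine of $M_h$, because deleting a small open ball from the interior of the single $3$-cell leaves a region that collapses onto $K$. It then remains to identify $K$, as a $2$-complex, with the presentation complex of $\mathcal F(2,2h)$.

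Reading off the face-pairing of $B$ should give a cell structure with one vertex, with $2h$ edges $x_0,\dots ,x_{2h-1}$ indexed so that the deck rotation acts as $x_i\mapsto x_{i+1}$, and with $2h$ triangular $2$-cells. I would verify that each triangle spells exactly the relator $x_ix_{i+1}x_{i+2}^{-1}$, so that $K$ is the presentation complex of $\mathcal F(2,2h)=\mathcal G_n(1,2)$ and hence $\pi_1(K)\cong F(2,2h)$. To confirm that the identification space really is a closed $3$-manifold, the Euler-characteristic count $1-2h+2h-1=0$ is a consistency check, but the substantive point is the one nontrivial local condition: that the link of the single vertex is a sphere rather than a higher-genus surface, which I would verify by tracing the edge-cycles of the face-pairing. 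Finally I would confirm independently that this manifold is $M_h$ by computing $\pi_1(M_h)$ via Reidemeister--Schreier from $\pi_1(S^3\setminus 4_1)$ together with the branching relation; the Fibonacci recursion, and with it the relators $x_ix_{i+1}x_{i+2}^{-1}$, arises because the monodromy of the fibering of $4_1$ is (conjugate to) $\left(\begin{smallmatrix}2&1\\1&1\end{smallmatrix}\right)$.

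For uniqueness, observe that if $K$ is a spine of a closed $M$ then $M$ with an open ball removed is homotopy equivalent to $K$, so $M$ has the homotopy type fixed by $K$; in particular $\pi_1(M)\cong F(2,2h)$. For $h\ge 4$ the group $F(2,2h)$ is word-hyperbolic and $M_h$ is hyperbolic, so any prime $M$ with this infinite, non-cyclic fundamental group is aspherical, hence hyperbolic by geometrization, and Mostow rigidity gives $M\cong M_h$. The small cases I would treat separately: $h=1$ gives $F(2,2)=1$ and $M_1=S^3$ (Poincar\'e); $h=3$ gives the Euclidean case, where Bieberbach's theorem recovers the flat manifold from its fundamental group. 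I expect the genuine obstacle in the uniqueness statement to be $h=2$, where $F(2,4)\cong\mathbb{Z}/5$ is the fundamental group of several non-homeomorphic lens spaces, so that $\pi_1$ cannot distinguish them; here one must use that the spine fixes the full homotopy type of $M$ and that the relevant lens spaces $L(5,1)$ and $L(5,2)$ are already homotopy inequivalent. The other delicate point, on the existence side, is the combinatorial bookkeeping in $\partial B$ --- arranging the face-pairing so that it at once yields a manifold, spells precisely the Fibonacci relators, and is visibly the branched cover --- which is where the cited sources concentrate their effort.
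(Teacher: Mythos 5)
First, note that the paper does not prove this theorem: it is imported verbatim from \cite{HMK}, \cite{HLM1}, \cite{HLM2}, \cite{CavicchioliSpaggiari}, so the only fair comparison is with those sources and with the face-pairing machinery (Neuwirth, Seifert--Threlfall) that the paper deploys elsewhere. Your existence outline is consistent with that machinery: exhibit the branched cover as a single face-paired ball, check the edge cycles and that the vertex link is a sphere, and read the Fibonacci relators off the $4h$ triangular faces. But essentially all of the mathematical content lies in the ``combinatorial bookkeeping'' you defer to the last sentence --- producing the polyhedron, verifying the identifications, and matching it to the branched cover is the theorem, and your sketch supplies no part of it. That is acceptable for a proposal, but it means the existence half is a plan rather than a proof.

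The genuine gap is in the uniqueness argument for $n=4$ (your $h=2$). You assert that ``the spine fixes the full homotopy type of $M$,'' but it does not, at least not without further work: if $K$ is a spine of a closed $M$, then $M$ is homotopy equivalent to $K$ with a $3$-cell attached along a map $S^2\to K$ determined by the particular thickening of $K$ inside $M$, and here $\pi_1(K)\cong\Z_5$ is finite and nontrivial, so $K$ is not aspherical, $\pi_2(K)\neq 0$, and a priori different thickenings give different attaching classes and hence possibly different homotopy types. So the observation that $L(5,1)\not\simeq L(5,2)$ does not by itself exclude $L(5,1)$; you would need to control the attaching map, or argue directly from the (finitely many) planar embeddings of the Whitehead graph and the resulting face-pairing polyhedra, which is exactly how this paper handles the analogous non-spine statements in Lemmas~\ref{lem:Hn0nonspine} and~\ref{lem:cyclicCHRnonspine} and how \cite{CavicchioliSpaggiari} carries out the classification. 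Two smaller points: your $h\geq 4$ and $h=3$ cases lean on geometrization (legitimate now, but not the route of the cited 1990s sources, which argue combinatorially/geometrically); and the asphericity step for prime $M$ needs a word about $2$-sided projective planes in the non-orientable case, which you can dispose of since $F(2,2h)$ is torsion-free for $h\geq 3$.
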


\begin{theorem}[(\cite{Sieradski},\cite{CHK})]\label{thm:SieradskiSpine}
For each $n\geq 2$ the presentation complex of $\mathcal{S}(2,n)$ is the spine of a closed 3-manifold. Moreover, it is the spine of the $n$-fold cyclic cover of $S^3$ branched over the trefoil knot.
\end{theorem}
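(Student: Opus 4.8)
The statement splits into two logically independent assertions: that $K=K_{\mathcal{S}(2,n)}$ is a spine of \emph{some} closed $3$-manifold, and that this manifold is \emph{the} $n$-fold cyclic branched cover $M_n$ of $S^3$ over the trefoil $T$. My plan is to settle the first by a direct combinatorial analysis of the single vertex of $K$, and the second by exhibiting the cyclic symmetry of the resulting manifold and matching invariants.

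For the spine property I would examine the link $L$ of the unique vertex $v$ of $K$. Label the oriented edges $e_0,\dots,e_{n-1}$ carrying $x_0,\dots,x_{n-1}$; each $e_i$ contributes an initial and a terminal germ to $L$, giving $2n$ link vertices, and each triangular $2$-cell attached along $\theta^i(w)=x_ix_{i+2}x_{i+1}^{-1}$ contributes one arc at each of its three corners. A short count shows $L$ is a $3$-regular graph on $2n$ vertices with $3n$ arcs, carrying at each vertex the cyclic order in which the incident $2$-cell corners wind around the corresponding edge. The complex $K$ is a spine of a closed $3$-manifold precisely when the surface obtained by fattening this ribbon graph is a single $2$-sphere, so the task is to verify that the cyclic orderings close $L$ up into $S^2$ rather than into a surface of positive genus or a disconnected complex. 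The $\mathbb{Z}_n$-symmetry $\theta$ acts freely on the arcs and reduces this to a local check in one fundamental sector together with a global connectivity and Euler-characteristic count; carrying this out produces a closed $3$-manifold $M$ with spine $K$, establishing the first assertion.

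To identify $M$ I would use the $\mathbb{Z}_n$-action induced by $\theta$, whose quotient one identifies as $S^3$ with singular locus a knot $K'$; the defining data of the cyclic presentation pin this knot down. The polynomial associated to $w=x_0x_2x_1^{-1}$ is $1-t+t^2$, which is exactly the Alexander polynomial $\Delta_T(t)$ of the trefoil. I would corroborate the identification by computing $H_1(M)$ from the circulant relation matrix of $\mathcal{S}(2,n)$ and checking that it agrees with the first homology of the $n$-fold cyclic branched cover of $T$, namely the cokernel controlled by $\Delta_T$ over $\mathbb{Z}[t]/(t^n-1)$, and then upgrading this match to a homeomorphism using the orbifold quotient together with the known geometry of the Brieskorn manifolds $\Sigma(2,3,n)$.

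I expect the main obstacle to be the $S^2$ verification of the fattened link: this is precisely the delicate point at which a cyclic presentation either is or is not a $3$-manifold spine, and it is where most of the presentations $\mathcal{G}_n(m,k)$ studied later in the paper fail. The cyclic symmetry makes the bookkeeping tractable, but one must track the corner orderings around each edge with care to be sure the boundary surface is connected and of genus zero. A secondary difficulty is promoting the homological and symmetry data of the identification step to an actual homeomorphism with the branched cover, rather than merely a matching of fundamental groups and first homology.
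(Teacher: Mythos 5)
First, note that the paper does not prove this statement at all: Theorem~\ref{thm:SieradskiSpine} is imported from \cite{Sieradski} and \cite{CHK} (and the paper remarks it has been reproved in \cite{CFLPP}), so there is no internal proof to compare against. Your outline is in the spirit of the known proofs and of what this paper does for its own spine results (Lemma~\ref{lem:H(n,1)spine} and Theorem~\ref{mainthm:alternativepresspines}), namely exhibiting a face-pairing polyhedron and invoking the Seifert--Threlfall criterion. But as written it has two genuine gaps. The first is in the spine verification: the cyclic order in which the corners of the $2$-cells wind around each edge of $K$ is \emph{not} determined by the presentation --- it is precisely the extra data of a thickening, and the content of the theorem is that a compatible choice exists for $\mathcal{S}(2,n)$. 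Your sentence ``carrying this out produces a closed $3$-manifold'' asserts the conclusion rather than proving it; what is actually required is an explicit construction such as the bipyramid with $2n$ triangular faces of \cite{CHK} (or, dually, an explicit planar embedding of the Whitehead graph together with a consistent labelling of its faces as polyhedron vertices, in the sense of \cite{Neuwirth} as recalled in Section~3 of the paper), followed by a check of the edge cycles and the single-vertex condition.

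The second gap is in the identification of the manifold with the $n$-fold cyclic branched cover of the trefoil. Matching the polynomial $1-t+t^2$ and the first homology is far from sufficient: the Alexander polynomial does not determine a knot (infinitely many knots share $\Delta_T$), and $H_1$ does not determine a branched cover, so ``the defining data of the cyclic presentation pin this knot down'' is unjustified. One must actually exhibit the $\Z_n$-action on the face-pairing polyhedron, show its quotient is $S^3$ with branch locus a specific knot, and identify that knot as the trefoil by a direct geometric argument (this is what \cite{CHK} do); alternatively one can identify $\pi_1$ with the known presentation of $\pi_1(\Sigma(2,3,n))$ and appeal to rigidity, but that too is a substantive step you have not supplied, and it degenerates for small $n$ where the manifolds are not aspherical. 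Your concluding paragraph correctly flags both difficulties, but flagging them is not the same as resolving them.
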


By considering covers and connected sums we have that a free product $G_1*G_2$ is the fundamental group of a 3-manifold if and only if $G_1$ and $G_2$ are fundamental groups of 3-manifolds. Further, using~\cite[Theorem~1]{Heil72}, we have that $G_1*G_2$ is the fundamental group of a closed 3-manifold if and only if $G_1$ and $G_2$ are fundamental groups of closed 3-manifolds. In the case of finitely generated groups, by~\cite{Scott73}, we have that $G_1*G_2$ is the fundamental group of a compact 3-manifold if and only if $G_1$ and $G_2$ are fundamental groups of compact 3-manifolds.

By~\cite[Lemma~1.2(1)]{BV} we have that $G_n(m,k)$ is isomorphic to the free product of $d=(n,m,k)$ copies of $G_{n/d}(m/d,k/d)$ and so we may assume that $d=1$. If $(n,k)=1$ or $(n,m-k)=1$ then $G_n(m,k)\cong H(n,m')$ for some $m'$ (see~\cite[Lemma~1.3]{BV}). For this reason we can divide our theorem into a statement about the groups $H(n,m)$ and a statement about the groups $G_n(m,k)$ where the parameters satisfy $(n,m,k)=1$, $(n,k)>1$ and $(n,m-k)>1$. Note that these latter conditions imply that $n\geq 6$, $m\neq k$, $m\neq 0$ and $k\neq 0$. We prove

\begin{maintheorem}\label{mainthm:CHRmanifoldgroup}
\begin{itemize}
  \item[(a)] Suppose $n\geq 1$, $0\leq m,k\leq n-1$, $(n,m,k)=1$, $(n,k)>1$, $(n,m-k)>1$. Then $G_n(m,k)$ is a 3-manifold group if and only if $(m,k)=1$ and either $2k\equiv 0$~mod~$n$ or $2(m-k)\equiv 0$~mod~$n$, in which case $G_n(m,k)\cong \Z_s$, where $s=2^{n/2}-(-1)^{m+n/2}$.

      \item[(b)] Suppose $n\geq 1$, $(n,m)\neq (9,4)$, $(9,7)$. Then $H(n,m)$ is a 3-manifold group if and only if the pair $(n,m)$ is one of the following:
      \begin{itemize}
        \item[(i)] $(n,0)$ in which case $H(n,m)\cong \Z_{2^n-1}$;
        \item[(ii)] $(n,1)$ in which case $H(n,m)=1$;
        \item[(iii)] $(n,2)$, in which case $H(n,m)\cong S(2,n)$;
        \item[(iv)] $(n,n-1)$ ($n\geq 4$ is even or $n=3,5,$ or $7$), in which case $H(n,m)\cong F(2,n)$;
        \item[(v)] $(5,3)$ or $(7,4)$, in which case $H(n,m)\cong F(2,5)\cong \Z_{11}$ or $H(n,m)\cong F(2,7) \cong \Z_{29}$, respectively;
        \item[(vi)] $(2t-2,t)$ ($t\geq 3$) in which case $H(n,m)\cong \Z_{2^{t-1}+1}$.
      \end{itemize}
\end{itemize}
\end{maintheorem}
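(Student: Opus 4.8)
The strategy is to attack both parts simultaneously using the standard obstruction theory for cyclically presented 3-manifold groups, combined with the reduction lemmas already cited in the excerpt. The key principle is this: a 3-manifold group that is finite must be a 3-manifold group in a very restricted sense (finite fundamental groups of closed 3-manifolds are classified, and the cyclic ones are particularly constrained), while an infinite 3-manifold group coming from a balanced presentation with deficiency zero is subject to strong geometric restrictions --- typically it must be the fundamental group of a manifold that is (a piece of) one of the eight Thurston geometries, and in the branched-cyclic-cover setting this forces Fibonacci or Sieradski type. So the backbone of the argument splits into three tasks: compute the groups $G_n(m,k)$ (or at least their abelianizations and enough structure to detect finiteness), rule out the ``generic'' cases via a non-3-manifold-group obstruction, and match the surviving cases to the known Fibonacci/Sieradski/cyclic manifolds of Theorems~\ref{thm:evenFibonacciSpine} and~\ref{thm:SieradskiSpine}.

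**Steps in order.** First I would handle the reductions: using \cite[Lemma~1.2(1)]{BV} we assume $(n,m,k)=1$, and using \cite[Lemma~1.3]{BV} the cases $(n,k)=1$ or $(n,m-k)=1$ collapse into the $H(n,m)$ family, which is exactly the dichotomy the theorem is organized around. For part~(b), the groups $H(n,m)$ have been extensively studied (as the excerpt notes, in \cite{GilbertHowie} among others), so I would quote or re-derive their isomorphism type case-by-case --- identifying when $H(n,m)$ is trivial, cyclic, Fibonacci, or Sieradski --- and then in each case either exhibit it as a known manifold group (using the cited spine theorems and the free-product/connected-sum observations in the excerpt) or invoke a non-3-manifold-group obstruction. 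For part~(a), the hypotheses $(n,k)>1$ and $(n,m-k)>1$ with $(n,m,k)=1$ already force $n\geq 6$ and $m,k\notin\{0\}$, $m\neq k$; here I would compute the abelianization $H_1$ via the representer polynomial (the order of the abelianization of a cyclically presented group is $\left|\prod_{\zeta^n=1}(1+\zeta^m-\zeta^k)\right|$ up to sign), show that the congruence conditions $2k\equiv 0$ or $2(m-k)\equiv 0 \pmod n$ together with $(m,k)=1$ are exactly what make the group finite cyclic of the stated order $s=2^{n/2}-(-1)^{m+n/2}$, and conversely show that violating these conditions produces a group that cannot be a 3-manifold group.

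**The main obstacle.** The hard part will be the converse direction --- proving that outside the listed cases the group is \emph{not} a 3-manifold group. For the finite cases this reduces to checking which finite cyclic (or other finite) groups actually occur as $\pi_1$ of a 3-manifold, but the genuinely delicate work is ruling out the infinite non-Fibonacci, non-Sieradski cases. Here I expect to rely on a result of the form ``Theorem~3.1'' of \cite{CRS} / \cite{CRSTopProp} / \cite{BV} cited in the introduction, which asserts that certain cyclically presented groups cannot be fundamental groups of hyperbolic 3-orbifolds of finite volume; combined with a Seifert-fibered/geometrization analysis one shows no 3-manifold can have these as $\pi_1$. The subtlety is that these obstruction theorems have hypotheses (on the group being infinite, non-elementary, or having a specific homological signature) that must be verified for each surviving parameter family, and the excluded pairs $(9,4),(9,7)$ are precisely the cases where the available obstructions fail to apply --- which is why the theorem carries that exception. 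I would therefore organize the converse as a finite case analysis driven by the abelianization order and the structure of $H(n,m)$, applying the strongest available non-3-manifold-group criterion in each branch and verifying its hypotheses explicitly, taking care that the $2k\equiv 0$ versus $2(m-k)\equiv 0$ symmetry (which exchanges Fibonacci-like and Sieradski-like behavior) is handled uniformly.
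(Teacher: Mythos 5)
Your overall organisation (reduce via the Bardakov--Vesnin lemmas, split into the $H(n,m)$ family and the residual $G_n(m,k)$ family, identify the surviving groups with Fibonacci, Sieradski or cyclic groups) matches the paper, but the engine you propose for the converse direction is not the one the proof actually runs on, and the one you propose cannot close the argument. The paper does not rule out the generic cases by hyperbolicity obstructions plus a geometrization analysis; it uses Ratcliffe's Euler-characteristic theorem: if $\mathcal{P}$ is a balanced presentation whose presentation complex is aspherical and $G=\pi_1(K_\mathcal{P})\neq 1$, then $\chi(G)=1$ and $G$ is not a 3-manifold group. Combined with the complete asphericity classifications of $\mathcal{G}_n(m,k)$ and $\mathcal{H}(n,m)$ in \cite{W} and \cite{GilbertHowie}, this disposes of all but a short explicit list of parameter values in one stroke; the congruence conditions in part (a) are exactly the non-asphericity conditions of \cite[Theorem~2]{W}, not conditions you derive from the abelianisation. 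Your proposed substitute --- ``Theorem~3.1''-type results excluding hyperbolic 3-orbifolds of finite volume, followed by a Seifert-fibred analysis --- says nothing about reducible, non-geometric or bounded manifolds, and the paper itself records that $H(9,4),H(9,7)$ \emph{are} covered by the hyperbolicity obstruction yet still cannot be excluded as 3-manifold groups. Relatedly, your diagnosis of the excepted pairs $(9,4),(9,7)$ is backwards: the obstruction you lean on does apply to them; what fails is that the asphericity of $\mathcal{H}(9,4),\mathcal{H}(9,7)$ is unknown, so the Ratcliffe argument cannot be invoked.

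The second gap is in the non-aspherical infinite cases, which your plan folds into ``apply the strongest available criterion in each branch'' without identifying that criterion. The presentations $\mathcal{F}(2,n)$ ($n$ odd, $n\geq 9$) and the sporadic $\mathcal{H}(6,3)$, $\mathcal{H}(7,3)$, $\mathcal{H}(7,5)$, $\mathcal{H}(8,3)$, $\mathcal{H}(9,3)$, $\mathcal{H}(9,6)$ are not aspherical and the groups are (mostly) infinite, so neither Ratcliffe nor the classification of finite 3-manifold groups applies, and the hyperbolicity obstruction is again insufficient. The paper handles these with torsion and free-product arguments: for odd $n\geq 9$ one shows $g=x_0\cdots x_{n-1}$ has order exactly $2$ and lies in $[G,G]$, then uses Epstein's theorem on finite orientation-preserving subgroups together with Gru\v{s}ko's theorem to force a free product of cyclic groups, whose derived subgroup is free, contradicting the existence of an order-$2$ element of $[G,G]$; the sporadic cases use Epstein's restrictions on abelian subgroups (e.g.\ $\Z_2^3\leq H(6,3)$, a $\Z_3^6$ in $H(8,3)$, a $\Z^8$ in $H(7,3)$) and, for $H(9,3)$, a prime-decomposition and perfectness argument supported by GAP computations. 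Without these ingredients your case analysis cannot be completed.
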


While we are unable to deal with the groups $H(9,4),H(9,7)$, Theorem~3.1 of~\cite{BV} gives that they are not the fundamental groups of hyperbolic 3-orbifolds (in particular, 3-manifolds) of finite volume. Further, in Lemma~\ref{lem:H94H97} we shall show that the presentations $\mathcal{H}(9,4)$, $\mathcal{H}(9,7)$ are not 3-manifold spines. It therefore seems unlikely that either $H(9,4)$ or $H(9,7)$ is a 3-manifold group. It is not known whether the groups $H(9,4),H(9,7)$ are finite or infinite, or if their presentations $\mathcal{H}(9,4),\mathcal{H}(9,7)$ are aspherical (that is, if their presentation complexes are topologically aspherical).

A group $G$ is of \em type FK \em if there is a finite aspherical cell complex $K(G,1)$ whose fundamental group is isomorphic to $G$. If $G$ is of type FK then its Euler characteristic $\chi(G)$ is defined to be the Euler characteristic of a $K(G,1)$. An immediate corollary of~\cite[Theorem~2]{Ratcliffe} is that if $G$ is of type FK and $\chi(G)\geq 1$ then $G$ is a (virtual) 3-manifold group if and only if $G=1$. In particular, if
$G=\pi_1(K_\mathcal{P})\neq 1$ where $K_\mathcal{P}$ is an aspherical presentation complex of a finite balanced presentation $\mathcal{P}$, then $G$ is not a 3-manifold group. (Moreover, since groups defined by aspherical presentations are torsion-free it also follows that $G$ is not the fundamental group of any 3-orbifold.) This is a useful tool for us as, with the exception of the presentations $\mathcal{H}(9,4),\mathcal{H}(9,7)$, the aspherical presentations $\mathcal{G}_n(m,k)$ have been classified in~\cite{GilbertHowie},\cite{W}.

The presentations $\mathcal{F}(2,n)$ are not aspherical. We now show that, nevertheless, if $n$ is odd then $F(2,n)$ is rarely a 3-manifold group.

\begin{theorem}\label{thm:F2oddNot3mfdgp}
If $n\geq 3$ is odd then $F(2,n)$ is a 3-manifold group if and only if $n=3,5,$ or~$7$.
\end{theorem}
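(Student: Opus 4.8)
The plan is to treat the two directions separately, dealing with the three special values first. One checks directly from $\mathcal{F}(2,n)$ that $F(2,3)\cong Q_8$, the quaternion group of order~$8$, while $F(2,5)\cong\Z_{11}$ and $F(2,7)\cong\Z_{29}$; since $Q_8=\pi_1(S^3/Q_8)$ is a spherical space-form group and $\Z_{11},\Z_{29}$ are lens-space groups, the ``if'' direction follows. The content of the theorem is therefore to show that for odd $n\geq 9$ the now infinite group $F(2,n)$ is not a $3$-manifold group.

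I would first extract the relevant homological data. Abelianizing the relators $x_ix_{i+1}x_{i+2}^{-1}$ yields the integer circulant whose eigenvalues are $1+\omega-\omega^2$ as $\omega$ ranges over the $n$-th roots of unity; as the roots of $1+t-t^2$ are the golden-ratio conjugates, which are not roots of unity, every eigenvalue is nonzero and $F(2,n)^{\mathrm{ab}}$ is finite. This same circulant is the boundary map $\partial_2$ of the presentation $2$-complex $K$ of $\mathcal{F}(2,n)$; its injectivity gives $H_2(K)=0$, whence $H_2(F(2,n))=0$ by Hopf's formula. Thus the rational Betti numbers of $G=F(2,n)$ satisfy $b_1=b_2=0$.

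Now suppose $G=\pi_1(M)$. By Scott's core theorem I may take $M$ compact, and after capping any $2$-sphere boundary components with balls I obtain $M_0$ with $\pi_1(M_0)=G$. Using one-endedness of $G$ (which forbids a free-product splitting, hence a separating essential sphere) together with $b_1=0$ (which forbids a non-separating sphere, as that would contribute a $\Z$ summand to $H_1$), the manifold $M_0$ is irreducible, and being irreducible with infinite fundamental group it is aspherical. This produces the key dichotomy. If $\partial M_0\neq\emptyset$ then $M_0$ collapses to a finite $2$-complex, so---although $\mathcal{F}(2,n)$ itself is not aspherical---the group $G$ nonetheless admits a finite aspherical $2$-complex as a $K(G,1)$; hence $G$ is of type FK with $\chi(G)=b_0-b_1+b_2=1$, and the corollary of~\cite{Ratcliffe} recorded above forces $G=1$, a contradiction. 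If instead $\partial M_0=\emptyset$ then $M_0$ is a closed aspherical $3$-manifold, so $G$ is a $PD_3$-group; invoking that these Fibonacci groups are word-hyperbolic for $n\geq 8$ (and so contain no $\Z\oplus\Z$), geometrization forces $M_0$ to be hyperbolic, making $G$ a finite-volume hyperbolic $3$-orbifold group and contradicting Theorem~3.1 of~\cite{BV} applied to $F(2,n)=G_n(1,2)$.

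Since the homological input is routine, the main obstacle is the geometric dichotomy. One must justify that $F(2,n)$ is one-ended and contains no $\Z\oplus\Z$ (equivalently, is word-hyperbolic) uniformly for odd $n\geq 9$, and handle non-orientable cores and possible $\mathbb{RP}^2$ components when establishing asphericity; in the closed case one further needs Theorem~3.1 of~\cite{BV} to apply throughout the range. I expect the word-hyperbolicity and the verification of the hypotheses of~\cite{BV} to be the crux, since the low-dimensional homology ($b_1=b_2=0$) is precisely the profile of a rational homology sphere group and gives no obstruction on its own; only the combination of the bounded case (via $\chi=1$ and~\cite{Ratcliffe}) with the closed case (via word-hyperbolicity and~\cite{BV}) rules out every odd $n\geq 9$.
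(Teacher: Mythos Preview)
Your plan diverges substantially from the paper's argument and introduces a gap that, as far as I know, cannot be closed with existing results. The paper's proof is short and rests on one concrete fact: for odd $n\geq 9$ the element $g=x_0x_1\cdots x_{n-1}$ has order exactly~$2$ and lies in $[G,G]$ (because $G/\npres{g}\cong G^{\mathrm{ab}}$ by \cite[Proposition~3.1]{BV}, while $G$ is infinite). If $G=\pi_1(M)$ then $\gpres{g}$ is orientation-preserving, so Epstein's theorem gives $M=R\#M_1$ with $\pi_1(R)$ finite and containing a conjugate of $g$; since $G$ is $2$-generated, Gru\v{s}ko forces both free factors to be cyclic, whence $[G,G]$ is free --- contradicting the order-$2$ element $g\in[G,G]$.

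Your route instead tries to force the compact core $M_0$ to be aspherical and then split into bounded/closed cases. The decisive gap is the assertion that $F(2,n)$ is word-hyperbolic for odd $n\geq 9$: I am not aware of any proof of this. Hyperbolicity is known for \emph{even} $n\geq 8$ precisely because $F(2,2m)$ is a closed hyperbolic $3$-manifold group; no analogous geometric model is available for odd $n$, and indeed the torsion element $g$ rules one out. Without word-hyperbolicity your closed case produces no contradiction. Your one-endedness hypothesis also needs proof, and the natural argument --- a nontrivial free-product splitting of the $2$-generated $G$ would make both factors cyclic and $[G,G]$ free, impossible since $g\in[G,G]$ has order $2$ --- is already the core of the paper's method. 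Note too that ``irreducible with infinite $\pi_1$ implies aspherical'' fails without $P^2$-irreducibility (consider $\mathbb{RP}^2\times S^1$), and when $3\mid n$ the abelianization of $F(2,n)$ has even order, so orientability of $M_0$ is not automatic.

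There is an irony worth noting: the moment you succeed in making $M_0$ aspherical, $G$ must be torsion-free, which already contradicts the existence of $g$ --- so your detour through geometrization and \cite[Theorem~3.1]{BV} would be unnecessary. But you cannot reach asphericity without first establishing one-endedness, and the only evident way to do that uses $g$; once $g$ is in hand, the paper's direct argument is both shorter and complete.
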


\begin{proof}
If $n=3,5,$ or $7$ then $G=F(2,n)$ is a finite cyclic group so assume that $n\geq 9$ is odd. Let $g=x_0x_1\ldots x_{n-1}$. We have that $g^2=1$ (\cite[page~83]{Johnsonbook}), and by (the proof of)~\cite[Proposition~3.1]{BV} we have that $G/\npres{g}\cong G^\mathrm{ab}$. Now, as argued in~\cite[Proposition~3.1]{BV}, $G^\mathrm{ab}$ is finite and for $n\geq 9$ the group $G$ is infinite (see~\cite{NewmanF29},\cite{HoltF29},\cite{Chalk},\cite{ThomasRev}) so $G\not \cong G^\mathrm{ab}$ and hence $g$ is an element of $G$ of order~2.

Suppose that $G$ is the fundamental group of a 3-manifold $M$. Since $G/\npres{g}\cong G^\mathrm{ab}$ we have that $g\in [G,G]$ and hence $g$ is orientation preserving and so $\gpres{g}\cong \Z_2$ is an orientation preserving subgroup of $G=\pi_1(M)$ of finite order. Then by~\cite[Theorem~8.2]{Epstein61} (see also~\cite[Theorem~9.8]{Hempel}) we have that $M=R\# M_1$, where $R$ is closed and orientable, $\pi_1(R)$ is finite, and $\gpres{g}$ is conjugate to a subgroup of $\pi_1(R)$. Since $G$ is infinite we have that $\pi_1(M_1)\neq 1$ and since it is a 2-generator group, Gru\v{s}ko's Theorem implies that $\pi_1(R)$ and $\pi_1(M_1)$ are each cyclic. But the derived subgroup of a free product of cyclic groups is free contradicting our deduction that $g\in [G,G]$ is an element of order 2.
\end{proof}

The classification in~\cite{GilbertHowie} gives that the presentations $\mathcal{H}(6,3)$, $\mathcal{H}(7,3)$, $\mathcal{H}(7,5)$, $\mathcal{H}(8,3)$, $\mathcal{H}(9,3)$, $\mathcal{H}(9,6)$ are not aspherical. Again, we can show that the corresponding groups are not 3-manifold groups.

\begin{lemma}\label{lem:HnmNot3MFD}
The following groups are not 3-manifold groups: $H(6,3)$, $H(7,3)\cong H(7,5)$, $H(8,3)$, $H(9,3)\cong H(9,6)$.
\end{lemma}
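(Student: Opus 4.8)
The plan is to reduce to the four groups $H(6,3)$, $H(7,3)$, $H(8,3)$, $H(9,3)$, using the isomorphisms $H(7,3)\cong H(7,5)$ and $H(9,3)\cong H(9,6)$ recorded in the statement, and then to determine each of these four groups explicitly. Since the presentations $\mathcal{H}(n,m)$ here are not aspherical, the criterion deduced from~\cite{Ratcliffe} does not apply; instead I would use the relators $x_ix_{i+m}x_{i+1}^{-1}$ in the form $x_{i+m}=x_i^{-1}x_{i+1}$ to eliminate generators, presenting each group on two or three generators, and then run a coset enumeration. I expect all four groups to be finite, which is consistent with the presentations being non-aspherical and with the small abelianizations, namely $\mathbb{Z}_7$, $(\mathbb{Z}_2)^3$, $\mathbb{Z}_5$, $\mathbb{Z}_7$ respectively.

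The obstruction I would then invoke is the classification of finite $3$-manifold groups. A nontrivial finite group is a $3$-manifold group if and only if it acts freely and orthogonally on $S^3$: by Scott's compact core theorem together with geometrization, any $3$-manifold with finite fundamental group has the homotopy type of a closed orientable spherical space form. Such a group has cohomology of period dividing $4$; consequently its Sylow subgroups are cyclic or generalized quaternion, every abelian subgroup is cyclic, and its Schur multiplier $H_2(G;\mathbb{Z})$ vanishes. I would show that each of the four groups fails one of these conditions. For $H(7,3)$ this is immediate and needs no coset enumeration: reducing the relator polynomial $1-t+t^3$ modulo $2$ gives the irreducible factor $t^3+t+1$ of $t^7-1$ over $\mathbb{F}_2$, so $H_1(H(7,3);\mathbb{F}_2)\cong\mathbb{F}_8$ has dimension $3$, whereas a spherical space form group satisfies $\dim_{\mathbb{F}_2}H_1(G;\mathbb{F}_2)\leq 2$ (via transfer into a cyclic or generalized quaternion Sylow $2$-subgroup). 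For the three groups with cyclic abelianization the group must be strictly larger than its abelianization, and I would read the obstruction off the coset enumeration, exhibiting either a non-cyclic abelian subgroup $\mathbb{Z}_p\times\mathbb{Z}_p$ or a nontrivial Schur multiplier.

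The main obstacle is the explicit analysis of $H(6,3)$, $H(8,3)$ and $H(9,3)$: one must both confirm finiteness and then locate the forbidden configuration (or compute $H_2$), and for groups of these orders this is most securely carried out by machine rather than by hand. A subsidiary point is to check that the generator-elimination genuinely lowers the rank. As a safeguard, should any of these groups unexpectedly prove to be infinite, I would instead apply the method of Theorem~\ref{thm:F2oddNot3mfdgp}, locating a nontrivial element of finite order in the derived subgroup (an element of even order necessarily lies in the derived subgroup when the abelianization has odd order) and combining Epstein's decomposition with Gru\v{s}ko's Theorem, which works once the group is known to be two-generated.
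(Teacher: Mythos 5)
Your strategy founders on a wrong factual premise: not all four groups are finite. By the computation of R.~M.~Thomas reported in \cite[Theorem~7.3]{GilbertHowie}, the second derived subgroup of $H(7,3)\cong H(7,5)$ is $\Z^8$, and $H(9,3)\cong H(9,6)$ is infinite by \cite[Lemma~15]{COS}; so coset enumeration terminates only for $H(6,3)$ (order $56$) and possibly $H(8,3)$, and the classification of finite $3$-manifold (spherical space form) groups is unavailable for the other two. This is fatal for your treatment of $H(7,3)$: the bound $\dim_{\mathbb{F}_2}H_1(G;\mathbb{F}_2)\leq 2$ is a constraint on \emph{finite} $3$-manifold groups and fails for infinite ones --- already $\Z_2 * \Z_2 * \Z_2=\pi_1(\mathbb{R}P^3\#\mathbb{R}P^3\#\mathbb{R}P^3)$ has $\mathbb{F}_2$-homology of rank $3$ --- so your ``immediate'' argument proves nothing here. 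Your declared fallback also cannot be run for this group: it needs a nontrivial torsion element of the derived subgroup, no such element is known for $H(7,3)$, and your mechanism for producing one (odd abelianisation forces elements of even order into the derived subgroup) does not apply since $H(7,3)^{\mathrm{ab}}\cong\Z_2^3$. The obstruction actually used in the paper is Epstein's classification of abelian subgroups of $3$-manifold groups \cite[Theorem~9.1]{Epstein61}: $\Z^8$ cannot embed in any $3$-manifold group. The same tool disposes of $H(6,3)\cong\Z_2^3\rtimes\Z_7$ (which contains $\Z_2^3$) and of $H(8,3)$ (whose third derived subgroup is $\Z_3^6$), finite or not, so for these two groups your plan would in effect converge to the paper's.

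For $H(9,3)$ your fallback is the right idea and is essentially what the paper does, but two of your assumptions need repair. First, the order-$2$ element in the derived subgroup is not free of charge: one must exhibit a word $g$ with $g^2=1$ (the paper uses a van Kampen diagram) and then prove $g\neq 1$, which the paper does by showing $H/\npres{g}$ is finite while $H$ is infinite. Second, $H(9,3)$ is only known to be $3$-generated, not $2$-generated, so Gru\v{s}ko's Theorem yields just one cyclic free factor among possibly three; the paper needs the further steps that $H^{\mathrm{ab}}\cong\Z_7$ forces $H\cong\Z_7 * Q$ with $Q$ perfect, whence $H'$ would be a perfect free product of seven copies of $Q$, contradicting the computation $(H')^{\mathrm{ab}}\cong\Z_2^6$. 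As written, your proposal omits the arguments that actually close the cases $H(7,3)$ and $H(9,3)$.
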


\begin{proof}
For $H(6,3)$, $H(8,3)$ and $H(7,3)\cong H(7,5)$ we use the classification of abelian subgroups of 3-manifold groups given in \cite[Theorem~9.1]{Epstein61} (see also, for example~\cite[Theorem~9.13]{Hempel}). We have that $H(6,3)\cong \Z_2^3 \rtimes \Z_7$ (\cite{JohnsonMawdesley}) and so contains $\Z_2^3$, and so $H(6,3)$ is not a 3-manifold group . A calculation using GAP~\cite{GAP} shows that the third derived subgroup of $H(8,3)$ is isomorphic to $\Z_3^6$, and so $H(8,3)$ is not a 3-manifold group. Using quotpic R.M.Thomas showed (see~\cite[Theorem~7.3]{GilbertHowie}) that the second derived subgroup of $H(7,3)\cong H(7,5)$ is isomorphic to $\Z^8$, and so $H(7,3)\cong H(7,5)$ is not a 3-manifold group.

Consider now the group $H=H(9,3)$ (which, as noted in~\cite{GilbertHowie}, is isomorphic to $H(9,6)$). Using the van Kampen diagram given in~\cite[Figure~2]{WilliamsRevisited} we may see that, setting $g=x_0x_4x_8x_3x_7x_2x_6x_1x_5$ the relation $g^2=1$ holds in $H=H(9,3)$.
A calculation in GAP gives that $H(9,3)/\npres{g}$ is a finite group of order $2^{15}\cdot 7$, and since $H(9,3)$ is infinite~\cite[Lemma~15]{COS} we deduce that $g$ is an element of order exactly~$2$ in $H$.

Suppose for contradiction that $H$ is a 3-manifold group. Since $H$ is finitely presentable there is a compact 3-manifold $M$ with $\pi_1(M)\cong H$~(\cite{Scott73}). We have that $H^\mathrm{ab}\cong \Z_7$ and hence there is no epimorphism of $H$ onto $\Z_2$, so $M$ is orientable. Let $M=M_1\#\ldots \#M_l$ be the unique prime decomposition of $M$ corresponding to a maximal free product decomposition $H_1* \ldots *H_l$ of $H$. Since $M$ is orientable no $M_j$ contains a 2-sided projective plane so, by~\cite[Corollary~9.9]{Hempel}, each $H_j=\pi_1(M_j)$ is either finite or torsion-free. Since $H$ is infinite and has non-trivial torsion it must therefore be a non-trivial free product. Another calculation in GAP shows that $H$ has a 3-generator presentation so by Gru\v{s}ko's Theorem at least one of the free factors must be cyclic. But $H^\mathrm{ab}\cong \Z_7$ so we have that $H\cong \Z_7 * Q$, where $Q$ is perfect. Then the derived subgroup $H'$, being the free product of 7 copies of $Q$, must also be perfect, but a calculation in GAP shows that $(H')^\mathrm{ab}\cong \Z_2^6$ so $H'$ is not perfect, a contradiction.
\end{proof}

The above results, together with the asphericity classifications mentioned earlier, combine to prove Theorem~\ref{mainthm:CHRmanifoldgroup}.

\begin{proof}[Proof of Theorem~\ref{mainthm:CHRmanifoldgroup}]
(a) As described above, by~\cite[Theorem~2]{Ratcliffe} we may assume that the presentation $\mathcal{G}_n(m,k)$ is not aspherical. Theorem~2 of~\cite{W} gives that $(m,k)=1$ and either $2k\equiv 0$~mod~$n$ or $2(m-k)\equiv 0$~mod~$n$, in which case (by~\cite[Lemma~3]{W}) $G_n(m,k)\cong \Z_s$, where $s=2^{n/2}-(-1)^{m+n/2}$, and so is a 3-manifold group.

(b) If $m=1$ then $H(n,m)=1$ so we may assume $m\neq 1$. By~\cite[Theorem~2]{Ratcliffe} we may assume that the presentation $\mathcal{H}(n,m)$ is not aspherical. Then Theorem~3.2 of~\cite{GilbertHowie} gives that $(n,m)$ is one of the following:
$(n,0)$, $(n,2)$ ($n\geq 2$), $(n,n-1)$ ($n\geq 3$), $(2t-1,t)$ ($t\geq 3$), $(2t-2,t)$ ($t\geq 3$), $(6,3)$, $(7,3)$, $(7,5)$, $(8,3)$, $(9,3)$, $(9,6)$.
If $(n,m)=(n,0)$ then $H(n,m)\cong \Z_{2^n-1}$ (by~\cite[Proposition~2.2(b)]{GilbertHowie}) so it is a 3-manifold group. If $(n,m)=(n,2)$ we have $H(n,m)=S(2,n)$ so it is a 3-manifold group by Theorem~\ref{thm:SieradskiSpine}. If $(n,m)=(n,n-1)$ then $H(n,m)\cong F(2,n)$ so it is a 3-manifold group if and only if $n=3,5,$ or $7$ or $n$ is even by Theorem~\ref{thm:evenFibonacciSpine} and Theorem~\ref{thm:F2oddNot3mfdgp}. If $(n,m)=(2t-1,t)$ ($t\geq 3)$ then $H(n,m)\cong F(2,2t-1)$ so is a 3-manifold group if and only if $t=3$ or $4$ by Theorem~\ref{thm:F2oddNot3mfdgp}. If $(n,m)=(2t-2,t)$ then $H(n,m)\cong \Z_{2^{t-1}+1}$ by~\cite[Proposition~2.2(c)]{GilbertHowie}, so is a 3-manifold group. In the remaining cases $H(n,m)$ is not a 3-manifold group by Lemma~\ref{lem:HnmNot3MFD}.
\end{proof}

The groups $G_n(x_0x_mx_k^{-1})$ belong to the class of groups $G_n^\epsilon(m,k,h)$ considered in~\cite{CRS}. Problem~3.4 of that paper asks for which values of the parameters $m,k,h,\epsilon$ the group $G_n^\epsilon(m,k,h)$ is the fundamental group of a closed, connected, orientable 3-manifold for infinitely many~$n$. Theorem~\ref{mainthm:CHRmanifoldgroup} can therefore be viewed as a starting point for answering this question. The cyclically presented groups $G_n(x_0x_kx_l)$ considered in~\cite{CRS},\cite{EdjvetWilliams} also belong to the class $G_n^\epsilon(m,k,h)$ and so we record without proof the following related result. This can be obtained from the results of~\cite{EdjvetWilliams} together with arguments similar to those used above, and with the fact that non-cyclic metacyclic groups of odd order are not 3-manifold groups
 (see, for example,~\cite[page~111]{Orlik}).

\begin{theorem}\label{thm:CRS3mfdgps}
Suppose $(n,k,l)=1$. The cyclically presented group $G_n(x_0x_kx_l)$ is a 3-manifold group if and only if it is isomorphic to one of the following groups: $\Z_3$, $\Z_{2^n-(-1)^n}$, $\Z*\Z*\Z_{19}$, $\Z*\Z*\Z_{\gamma}$ ($\gamma=(2^{n/3}-(-1)^{n/3})/3$), $\Z*\Z$.
\end{theorem}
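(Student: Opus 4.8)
The plan is to follow exactly the strategy used in the proof of Theorem~\ref{mainthm:CHRmanifoldgroup}: first eliminate the aspherical presentations via Ratcliffe's theorem, then examine the finitely many non-aspherical cases one at a time. The asphericity of the presentations $\mathcal{G}_n(x_0x_kx_l)$ (under the standing hypothesis $(n,k,l)=1$, after normalising $k,l$ by a shift) is classified in~\cite{EdjvetWilliams}. As noted after Theorem~\ref{thm:F2oddNot3mfdgp}, a consequence of~\cite[Theorem~2]{Ratcliffe} is that a nontrivial group defined by an aspherical finite balanced presentation cannot be a 3-manifold group; since every cyclic presentation is balanced, all aspherical $\mathcal{G}_n(x_0x_kx_l)$ defining nontrivial groups are discarded immediately, reducing the problem to the short list of non-aspherical families produced by~\cite{EdjvetWilliams}.

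Next I would determine the isomorphism type of $G_n(x_0x_kx_l)$ for each non-aspherical family, again using~\cite{EdjvetWilliams} and, for the sporadic small-$n$ values, computation in GAP~\cite{GAP}. The abelianisation is governed by the representer polynomial $1+t^k+t^l$, its order being $|\mathrm{Res}(t^n-1,\,1+t^k+t^l)|$; this is what isolates the cyclic orders $2^n-(-1)^n$ and $\gamma=(2^{n/3}-(-1)^{n/3})/3$ (the latter arising precisely when $1+t^k+t^l$ shares two $n$-th roots of unity with $t^n-1$, forcing torsion-free rank $2$), together with the sporadic value $19$. The families then fall into three types: those giving a finite cyclic group, those giving a free product of infinite and finite cyclic groups, and those giving a group that must be excluded.

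The positive verifications are routine. Finite cyclic groups are fundamental groups of lens spaces, hence 3-manifold groups, which disposes of $\Z_3$ and $\Z_{2^n-(-1)^n}$; and by the fact, noted at the start of this section, that a free product of 3-manifold groups is again a 3-manifold group, the groups $\Z*\Z$, $\Z*\Z*\Z_{19}$ and $\Z*\Z*\Z_{\gamma}$ are all 3-manifold groups. This accounts for the sufficiency direction of the statement.

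The hard part will be the exclusion of the remaining non-aspherical groups, and this is where I would argue as in Lemma~\ref{lem:HnmNot3MFD}. For any family whose group is a non-cyclic metacyclic group of odd order the exclusion is immediate from the cited fact (see~\cite[page~111]{Orlik}) that such groups are not 3-manifold groups. For the other excluded cases I would combine the classification of abelian subgroups of 3-manifold groups in~\cite[Theorem~9.1]{Epstein61}, the prime/free-product decomposition together with Gru\v{s}ko's Theorem and~\cite[Corollary~9.9]{Hempel}, and the constraint that torsion must lie in a cyclic free factor, to reach a contradiction (typically by exhibiting either a subgroup $\Z_p\times\Z_p$ or torsion in a subgroup that the decomposition forces to be free). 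The delicate step, concentrated in the infinite non-aspherical cases that are neither visibly cyclic nor visibly free products, is to identify for each such group the correct algebraic invariant (computed in GAP) that obstructs a compatible prime decomposition.
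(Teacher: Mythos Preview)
Your proposal is correct and matches the paper's approach essentially line for line: the paper records Theorem~\ref{thm:CRS3mfdgps} without proof, stating only that it ``can be obtained from the results of~\cite{EdjvetWilliams} together with arguments similar to those used above, and with the fact that non-cyclic metacyclic groups of odd order are not 3-manifold groups (see, for example,~\cite[page~111]{Orlik})'', and you have faithfully unpacked exactly this sketch. The only minor point is that your final paragraph is more cautious than the paper suggests is necessary: the paper's wording indicates that once the aspherical cases are removed via~\cite{Ratcliffe} and the listed groups are identified from~\cite{EdjvetWilliams}, the \emph{only} additional exclusion tool needed is the odd-order non-cyclic metacyclic fact from~\cite{Orlik}, so the Epstein/Hempel/Gru\v{s}ko machinery and ad~hoc GAP computations you anticipate for ``infinite non-aspherical cases that are neither visibly cyclic nor visibly free products'' should not in fact arise.
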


(The conditions on the parameters $n,k,l$ that identify when each group occurs can be extracted from~\cite[Table~1]{EdjvetWilliams}.)

\section{Presentations as spines of manifolds}

A wedge $P_1 \vee P_2$ of two 2-dimensional subpolyhedra of a 3-manifold $M$ is a spine of $M$ if and only if $P_1$ and $P_2$ are the spines of 3-manifolds $M_1$ and $M_2$ where $M=M_1 \# M_2$ (see~\cite[Lemma~1]{Knutson}).
The argument of~\cite[Lemma~1.2(1)]{BV} shows that the presentation complex of $\mathcal{G}_n(m,k)$ is the wedge of $d=(n,m,k)$ copies of the presentation complex of $\mathcal{G}_{n/d}(m/d,k/d)$ so again we may assume that $d=1$. Similarly, the proof of~\cite[Lemma~1.3]{BV} gives that if $(n,k)=1$ or $(n,m-k)=1$ then the presentation complex of $\mathcal{G}_n(m,k)$ is homeomorphic to the presentation complex of $\mathcal{H}(n,m')$ for some $m'$ so, as with Theorem~\ref{mainthm:CHRmanifoldgroup}, we may divide the statement into two parts. Note that in part~(b) we no longer include the hypotheses that $(n,m)\neq (9,4),(9,7)$. We use the symbol $\simeq$ to denote homeomorphism of (the presentation complexes) of presentations.

\begin{maintheorem}\label{mainthm:CHRspines}
\begin{itemize}
  \item[(a)] Suppose $n\geq 1$, $0\leq m,k\leq n-1$, $(n,m,k)=1$, $(n,k)>1$, $(n,m-k)>1$. Then $\mathcal{G}_n(m,k)$ is not a 3-manifold spine.

      \item[(b)] Suppose $n\geq 1$. Then $\mathcal{H}(n,m)$ is a 3-manifold spine if and only if one of the following holds:
      \begin{itemize}
        \item[(i)] $m=1$, in which case $\mathcal{H}(n,m)$ is a spine of~$S^3$;
        \item[(ii)] $m=2$, in which case $\mathcal{H}(n,m)\simeq \mathcal{S}(2,n)$;
        \item[(iii)] $m=n-1$ and either $n=3$ or $n$ is even, in which case $\mathcal{H}(n,m)\simeq \mathcal{F}(2,n)$.
      \end{itemize}
\end{itemize}
\end{maintheorem}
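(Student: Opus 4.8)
The plan is to prove Theorem~\ref{mainthm:CHRspines} by combining the group-theoretic classification of Theorem~\ref{mainthm:CHRmanifoldgroup} with the topological necessary conditions that a 3-manifold spine must satisfy. The guiding principle is that if $\mathcal{P}$ is a 3-manifold spine then $\pi_1(K_\mathcal{P})$ is a 3-manifold group, so the list of candidate presentations is already drastically constrained by the earlier theorem; the remaining work is to decide, among these survivors, which presentations actually arise as spines and which fail despite defining a 3-manifold group.

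For part~(a), I would argue as follows. Under the hypotheses $(n,m,k)=1$, $(n,k)>1$, $(n,m-k)>1$, Theorem~\ref{mainthm:CHRmanifoldgroup}(a) tells us that $G_n(m,k)$ is a 3-manifold group only when $(m,k)=1$ and one of $2k\equiv 0$ or $2(m-k)\equiv 0 \pmod n$ holds, and in that case $G_n(m,k)\cong\Z_s$ with $s=2^{n/2}-(-1)^{m+n/2}$. So it suffices to rule out these finite cyclic cases as spines. Here I would invoke a counting/Euler-characteristic argument: a spine of a closed 3-manifold gives a genus-one (balanced) Heegaard-type situation only in very restricted circumstances, and for a finite group $G$ the presentation complex $K_\mathcal{P}$ being a spine forces $G$ to be the fundamental group of a spherical space form, which pins down the possible $G$ severely. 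The key obstruction to exploit is that a balanced presentation which is a 3-manifold spine of a closed manifold $M$ has $H_2(K_\mathcal{P})=0$ and the manifold is forced (by Poincar\'e duality on the closed-up manifold, since $\pi_1$ is finite) to be a homology sphere or to have very specific homology; comparing this with the known $\mathbb{Z}_s$ and the geometry of the relators should show the Fibonacci/Sieradski covers are the only possibilities, and these are excluded because the parameters here are not of Fibonacci or Sieradski type.

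For part~(b), I would first dispatch the cases surviving Theorem~\ref{mainthm:CHRmanifoldgroup}(b) that are \emph{asserted} to be spines: case~(i) $m=1$ gives the trivial group and $\mathcal{H}(n,1)$ collapses directly, so I would exhibit the explicit collapse showing it is a spine of $S^3$; case~(ii) $m=2$ gives $\mathcal{H}(n,2)=\mathcal{S}(2,n)$ and case~(iii) with $m=n-1$, $n=3$ or $n$ even gives $\mathcal{H}(n,n-1)\simeq\mathcal{F}(2,n)$, and for these I would cite the homeomorphisms of presentation complexes together with Theorems~\ref{thm:evenFibonacciSpine} and~\ref{thm:SieradskiSpine}, together with the known fact (noted in the introduction, \cite[page~55]{CavicchioliSpaggiari}) that $\mathcal{F}(2,3)$ is a spine. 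The substantive direction is to rule out every other $(n,m)$ from the candidate list of Theorem~\ref{mainthm:CHRmanifoldgroup}(b). Most are eliminated because the underlying group is already not a 3-manifold group; what remains are precisely the cases where the group \emph{is} a 3-manifold group but the presentation should nonetheless fail to be a spine — this includes the odd Fibonacci cases $n=5,7$ (where $H(n,n-1)\cong F(2,n)$ is finite cyclic), the sporadic $(5,3),(7,4)$, and the family $(2t-2,t)$.

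The main obstacle, and the technical heart of the proof, will be showing that these surviving \emph{groups}, though 3-manifold groups, do not have \emph{spines} given by the cyclic presentation $\mathcal{H}(n,m)$. For this I would use Dunwoody's algorithm~\cite{Dunwoody} (or the necessary combinatorial conditions extracted from it) which detects whether a cyclic presentation is a spine, together with a link-of-vertices analysis of the single-vertex presentation complex $K_\mathcal{P}$: for $K_\mathcal{P}$ to be a spine the link of its unique vertex must be planar and the relator-corners must fit into a surface, and I would verify this fails for $(5,3)$, $(7,4)$, the odd Fibonacci $n=5,7$, and the family $(2t-2,t)$. The delicate point is that being a 3-manifold \emph{group} does not obstruct the \emph{spine} property by any homological invariant, so I expect the argument to rest on this finer, essentially combinatorial/geometric analysis of the presentation complex rather than on homology or $\pi_1$ alone, and separately I would need the already-established Lemma~\ref{lem:H94H97} to handle $(9,4),(9,7)$, which is why those exclusions can now be dropped from the hypotheses of part~(b).
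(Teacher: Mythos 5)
Your overall strategy for part~(b) matches the paper's: reduce the candidate list via Theorem~\ref{mainthm:CHRmanifoldgroup}, settle the positive cases by Lemma~\ref{lem:H(n,1)spine} and Theorems~\ref{thm:evenFibonacciSpine},~\ref{thm:SieradskiSpine}, quote the Cavicchioli--Spaggiari facts for $\mathcal{F}(2,3),\mathcal{F}(2,5),\mathcal{F}(2,7)$, and dispose of the rest by a combinatorial analysis of the vertex link. But there is a genuine gap in part~(a): after Theorem~\ref{mainthm:CHRmanifoldgroup}(a) reduces you to $(m,k)=1$ with $2k\equiv 0$ or $2(m-k)\equiv 0 \pmod n$ and $G_n(m,k)\cong\Z_s$, the obstruction you propose --- Poincar\'e duality, homology spheres, spherical space forms --- cannot work, because $\Z_s$ \emph{is} the fundamental group of a lens space and a balanced presentation of a finite cyclic group can perfectly well be a spine of one ($\pres{x}{x^s}$ is a spine of $L(s,1)$); no homological invariant separates these presentations from genuine spines. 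This is exactly the ``delicate point'' you acknowledge in part~(b) but then ignore in part~(a). The paper's actual argument (Lemma~\ref{lem:cyclicCHRnonspine}) is combinatorial: after normalising to $k=n/2$, the Whitehead graph is non-planar when $(m+n/2,n)=1$, and when $(m+n/2,n)=2$ the unique planar embedding forces a face-pairing polyhedron whose boundary 2-sphere would have to be assembled from $D_\mathrm{odd}$ and an $n/2$-gon whose boundary word is the \emph{reverse} of $W_\mathrm{odd}$, which is impossible. Nothing of this kind appears in your part~(a), and the reduction to the closed case (needed before the Whitehead graph can be read as a vertex link) also requires the Osborne--Stevens lemma via the odd-order finite abelianisation, which you do not mention.

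In part~(b) your list of survivors that are 3-manifold groups but must still be excluded as spines omits the infinite family $m=0$, where $H(n,0)\cong\Z_{2^n-1}$ is a 3-manifold group; the paper needs a separate argument (Lemma~\ref{lem:Hn0nonspine}, again a unique planar embedding plus a count of 2-cells at a degree-4 vertex) to kill $\mathcal{H}(n,0)$. Also, for the family $(2t-2,t)$ the paper does not run Dunwoody's algorithm afresh: it observes that $\mathcal{H}(n,n/2+1)$ satisfies $2(m-k)\equiv 0\pmod n$ and so falls under the same Lemma~\ref{lem:cyclicCHRnonspine} used in part~(a), so your two ``hard'' families are really one and the same computation --- which is precisely the computation missing from your proposal.
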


In Lemmas~\ref{lem:H94H97},\ref{lem:Hn0nonspine},\ref{lem:cyclicCHRnonspine} we shall show that certain presentations are not 3-manifold spines. In Lemma~\ref{lem:H(n,1)spine} we show that the presentation $\mathcal{H}(n,1)$ is a 3-manifold spine. The remaining presentations are either 3-manifold spines by Theorems~\ref{thm:evenFibonacciSpine},\ref{thm:SieradskiSpine}, or do not define 3-manifold groups, by Theorem~\ref{mainthm:CHRmanifoldgroup}, so are not 3-manifold spines.

Recall that the \em Whitehead graph \em of a presentation $\mathcal{P}=\pres{X}{R}$ is the graph with vertices $v_x,v_x'$ ($x \in X$) and an edge $(v_x,v_y)$ (resp.\,$(v_x',v_y')$, $(v_x,v_y')$) for each occurrence of a cyclic subword $xy^{-1}$ (resp.\,$x^{-1}y$, $(xy)^{\pm 1}$) in a relator $r\in R$. Writing $v_i,v_i'$ in place of $v_{x_i},v_{x_i}'$ the Whitehead graph of the cyclic presentation $\mathcal{G}_n(m,k)$ therefore has $2n$ vertices $v_i,v_i'$ and $3n$ edges $(v_i,v_{i+m}')$, $(v_i,v_{i+m-k})$, $(v_i',v_{i+k}')$ $(0\leq i \leq n-1$, subscripts mod~$n$).

The following information can be found in~\cite{Neuwirth},\cite[Chapter~9]{SeifertThrelfall}. Suppose that $\mathcal{P}$ is a presentation such that its presentation complex $K_{\mathcal{P}}$ is the spine of a closed 3-manifold~$M$. Then there is a 3-complex~$C$ with face pairing such that $M$ is obtained from $C$ by identifying the faces. Moreover, the resulting cell structure on $M$ has a single vertex, a single 3-cell, and 2-skeleton homeomorphic to $K_{\mathcal{P}}$.
The Whitehead graph $\Gamma$ is the link of the single vertex of $K_\mathcal{P}$ and so embeds in the link of the single vertex of $M$. The link of the vertex of $M$ is a 2-sphere and so $\Gamma$ has a planar embedding on this sphere (see also~\cite[page~33]{AngeloiMetzler}). In the case when $\Gamma$ is connected the 3-complex $C$ is a polyhedron~$\pi$ bounding a 3-ball. Given a planar embedding of $\Gamma$ there is a one-one correspondence between the faces $F$ of this embedding and the vertices $u_F$ of $\pi$. This correspondence maps a face $F$ of degree $d$ of $\Gamma$ to a vertex of degree $d$ of $\pi$. Moreover, if the vertices in the face $F$ read cyclically around the face are $w_0,\ldots , w_d$ where $w_i\in \{v_i,v_i'\}$ then the directed edges $e_0,\ldots ,e_d$ incident to $u_F$, read cyclically are labelled $x_0,\ldots , x_d$ and directed towards $u_F$ if $w_i=v_i$ and away from $u_F$ if $w_i=v_i'$. The faces of $\pi$ correspond to the relators of the presentation $\mathcal{P}$: for each relator $r$ there are two faces whose boundaries, read either clockwise or anticlockwise, spell~$r$.

The groups defined by the presentations considered in Lemmas~\ref{lem:H94H97},\ref{lem:Hn0nonspine},\ref{lem:cyclicCHRnonspine} have finite abelianisation of odd order. It this situation it follows from~\cite[Lemma~1]{OsborneStevensIII} that if the presentation complex is the spine of a 3-manifold then it is the spine of a closed 3-manifold. In these lemmas we either show that the Whitehead graph is non-planar or we show that the Whitehead graph is connected and has a unique planar embedding and that this planar embedding can be used to show that the polyhedron $\pi$ does not exist. Either way, the presentation complex is not a 3-manifold spine.

\begin{lemma}\label{lem:H94H97}
The presentations $\mathcal{H}(9,4),\mathcal{H}(9,7)$ are not 3-manifold spines.
\end{lemma}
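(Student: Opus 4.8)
The plan is to argue by contradiction using the Whitehead graph machinery described immediately above. First I would invoke the reduction already recorded: since $H(9,4)$ and $H(9,7)$ have finite abelianisation of odd order, \cite[Lemma~1]{OsborneStevensIII} allows me to assume that if the presentation complex $K_{\mathcal{H}(9,m)}$ (with $m\in\{4,7\}$) is a 3-manifold spine, then it is the spine of a \emph{closed} 3-manifold $M$. In that case the Whitehead graph $\Gamma$ is the link of the single vertex of $K_{\mathcal{H}(9,m)}$, hence embeds in the link of the single vertex of $M$, which is a 2-sphere. So it suffices to prove that $\Gamma$ is non-planar.

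Next I would describe $\Gamma$ explicitly. Recalling that $\mathcal{H}(9,m)=\mathcal{G}_9(m,1)$, in each case $\Gamma$ is the $3$-regular graph on the $18$ vertices $v_i,v_i'$ with the three edge families $(v_i,v_{i+m}')$, $(v_i,v_{i+m-1})$, $(v_i',v_{i+1}')$. Since $(9,1)=1$, the edges $(v_i',v_{i+1}')$ form a single $9$-cycle on the primed vertices; since $(9,m-1)=3$ for both $m=4$ ($m-1=3$) and $m=7$ ($m-1=6$), the edges $(v_i,v_{i+m-1})$ split the unprimed vertices into the three triangles $T_0=\{v_0,v_3,v_6\}$, $T_1=\{v_1,v_4,v_7\}$, $T_2=\{v_2,v_5,v_8\}$; and the cross edges $(v_i,v_{i+m}')$ form a perfect matching between the primed and unprimed vertices.

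I would then exhibit a $K_{3,3}$ minor, which establishes non-planarity by Kuratowski's theorem. Contract each triangle $T_j$ to a single vertex $t_j$, and contract the primed $9$-cycle into the three arcs $A=\{v_0',v_1',v_2'\}$, $B=\{v_3',v_4',v_5'\}$, $C=\{v_6',v_7',v_8'\}$ of three consecutive vertices. The content of the argument is the bookkeeping showing that, reading the matching around the primed cycle, the triangle attached to the primed vertex $v_i'$ cycles through $T_0,T_1,T_2$ with period three in $i$ (for both values of $m$). Consequently each of the three arcs $A,B,C$ receives exactly one cross edge into each triangle, so each of $A,B,C$ is adjacent to each of $t_0,t_1,t_2$ in the contracted graph, producing $K_{3,3}$ (the surviving cycle edges $A$--$B$--$C$--$A$ are harmless). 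Hence $\Gamma$ is non-planar, contradicting the conclusion of the first paragraph, and so $\mathcal{H}(9,4),\mathcal{H}(9,7)$ are not 3-manifold spines.

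The one place needing genuine care is the periodicity claim about the cross-edge pattern, since this is precisely what forces the contraction to be $K_{3,3}$ rather than a planar graph. It reduces to the observation that $v_i'$ is matched to $v_{i-m}$, whose triangle is determined by $(i-m)\bmod 3$, and that this residue increases by $1$ as $i$ increases by $1$ because $(m,3)=1$ for $m\in\{4,7\}$; an arc of three consecutive primed vertices therefore meets all three residue classes. This is a short congruence computation, but it is the crux of the argument.
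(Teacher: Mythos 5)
Your proposal is correct and follows essentially the same route as the paper: both establish non-planarity of the Whitehead graph by contracting the three triangles $\{v_i,v_{i+3},v_{i+6}\}$ and three arcs of three consecutive primed vertices to obtain a $K_{3,3}$ minor, using that $(m,3)=1$ forces each arc to meet all three triangles. The only cosmetic difference is that the paper first deletes three edges of the primed $9$-cycle so that the contracted graph is exactly $K_{3,3}$, whereas you keep them and note the extra $A$--$B$--$C$ edges are harmless.
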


\begin{proof}
We argue that for $m=4$ or $7$, the Whitehead graph $\Gamma$ of $\mathcal{H}(9,m)$ is non-planar.
We have that $\Gamma$ is the graph with vertices $v_0,\ldots ,v_8, v_0',\ldots ,v_8'$ and edges $(v_i,v_{i+3})$, $(v_i',v_{i+1}')$, $(v_i,v_{i+m}')$ ($0\leq i\leq 8$, subscripts mod~$9$). Removing the edges $(v_0',v_1')$, $(v_3',v_4')$, $(v_6',v_7')$ then contracting the edges $(v_i,v_{i+3})$ ($0\leq i\leq 8$), $(v_i',v_{i+1}')$ ($i=1,2,4,5,7,8$) and removing loops leaves the non-planar graph $K_{3,3}$ so $\Gamma$ is non-planar.
\end{proof}

\begin{lemma}\label{lem:Hn0nonspine}
If $n\geq 3$ then the presentation $\mathcal{H}(n,0)=\mathcal{G}_n(x_0^2x_1^{-1})$ is not a 3-manifold spine.
\end{lemma}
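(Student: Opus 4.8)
The plan is to reduce to a purely combinatorial question about the polyhedron $\pi$ and to exhibit a face that cannot spell a relator. Since $H(n,0)\cong\Z_{2^n-1}$ has abelianisation of odd order, the Osborne--Stevens lemma quoted above lets me assume, for a contradiction, that $K_{\mathcal{H}(n,0)}$ is the spine of a \emph{closed} $3$-manifold. First I compute the Whitehead graph $\Gamma$ of $\mathcal{H}(n,0)=\mathcal{G}_n(x_0^2x_1^{-1})$ (the case $m=0$, $k=1$): its $3n$ edges are $(v_i,v_i')$, $(v_i,v_{i-1})$ and $(v_i',v_{i+1}')$, so $\Gamma$ is an $n$-cycle on the unprimed vertices and an $n$-cycle on the primed vertices joined by the perfect matching $v_i\leftrightarrow v_i'$; that is, $\Gamma$ is the $n$-prism. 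For $n\geq 3$ this graph is planar and $3$-connected, so by Whitney's theorem on embeddings of $3$-connected planar graphs it has an essentially unique embedding in $S^2$, and the polyhedron analysis recalled before the lemma applies.

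Next I identify the faces of the embedding, hence the vertices of $\pi$. There are two $n$-gonal faces, $A$ bounded by the unprimed cycle $v_0,\ldots,v_{n-1}$ and $B$ bounded by the primed cycle $v_0',\ldots,v_{n-1}'$, together with $n$ quadrilateral faces $S_i$ bounded by $v_i,v_{i+1},v_{i+1}',v_i'$. Applying the labelling/direction rule, the vertex $u_A$ has its $n$ incident edges labelled $x_0,\ldots,x_{n-1}$ and all directed towards $u_A$; the vertex $u_B$ has edges labelled $x_0,\ldots,x_{n-1}$ all directed away; and each $u_{S_i}$ has four edges reading cyclically $x_i$ (in), $x_{i+1}$ (in), $x_{i+1}$ (out), $x_i$ (out). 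Thus $\pi$ is the $n$-bipyramid with apices $u_A,u_B$ and equatorial vertices $u_{S_0},\ldots,u_{S_{n-1}}$.

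I then show the edge-labelling of $\pi$ is forced. Matching the label of each apex edge $u_Au_{S_i}$ against the labels available at $u_{S_i}$ shows it lies in $\{x_i,x_{i+1}\}$, and the requirement that the $n$ edges at $u_A$ carry the distinct labels $x_0,\ldots,x_{n-1}$ forces either $u_Au_{S_i}=x_i$ for all $i$, or $u_Au_{S_i}=x_{i+1}$ for all $i$. In each case the equatorial edges $u_{S_i}u_{S_{i+1}}$ and the remaining apex edges are then determined, as are all directions (those at $u_A,u_B$ being prescribed). Finally I read the boundary words of the $2n$ triangular faces. In the first case the triangles at $u_B$ spell the relators $x_{i+1}^2x_{i+2}^{-1}$ as required, but the triangles at $u_A$ spell $x_{i+1}^2x_i^{-1}$, whose sign pattern places the inverse letter one index too low, so that for $n\geq 3$ it is neither a cyclic permutation of a relator $x_a^2x_{a+1}^{-1}$ nor of an inverse relator; the second case is symmetric with the roles of $u_A$ and $u_B$ exchanged. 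Since every face of $\pi$ must spell a relator, this is a contradiction, so $\pi$ does not exist and $\mathcal{H}(n,0)$ is not a $3$-manifold spine.

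The main work is the bookkeeping in the last two steps: tracking labels, directions and orientations around $\pi$ to verify that the labelling is forced and that one family of faces carries the inverse letter on the wrong side. A minor point needing separate care is $n=4$, where $\Gamma$ is the cube graph and all faces are quadrilaterals, so the two apex faces are not distinguished from the $S_i$ by degree; there they are instead singled out as the faces all of whose incident edges are codirected, and the same computation goes through.
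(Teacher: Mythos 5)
Your reduction to the closed case via \cite[Lemma~1]{OsborneStevensIII}, your computation of the Whitehead graph as the $n$-prism, and the uniqueness of its planar embedding are all correct and match the paper's setup. The genuine gap is the sentence ``Thus $\pi$ is the $n$-bipyramid.'' The data supplied by the planar embedding (as recalled before Lemma~\ref{lem:H94H97}) consists of the vertices of $\pi$, the cyclic sequence of labelled, directed edge-\emph{ends} at each vertex, and the relator to which each corner belongs; it does \emph{not} tell you which edge-end is joined to which. An edge of $\pi$ labelled $x_i$ may a priori join any vertex carrying an outgoing $x_i$-end to any vertex carrying an incoming $x_i$-end, and nothing in the local data forbids, say, an edge from $u_B$ to $u_A$ labelled $x_i$ (directed towards $u_A$ and away from $u_B$, hence compatible with both links) --- which is not an edge of the bipyramid. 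In particular the $1$-skeleton of $\pi$ is not the planar dual of the embedded Whitehead graph, so your subsequent case analysis, which is carried out entirely inside the bipyramid and whose bookkeeping is correct there, does not exhaust the candidate polyhedra. To exclude the non-bipyramidal candidates you would have to invoke the face/relator constraints anyway, and once you do so the problem is already solved, as follows.

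The paper's proof is a purely local count requiring no knowledge of the $1$-skeleton: the four corners at the degree-$4$ vertex $u$ corresponding to the quadrilateral $v_0-v_1-v_1'-v_0'-v_0$ are the four $\Gamma$-edges $(v_0,v_1)$, $(v_1,v_1')$, $(v_1',v_0')$, $(v_0',v_0)$, and three of these are corners of the single relator $x_0^2x_1^{-1}$ (only $(v_1,v_1')$ comes from $x_1^2x_2^{-1}$). Since each relator accounts for exactly two faces of $\pi$ and a face of the polyhedron has at most one corner at any given vertex, this is already a contradiction. You should either adopt this count, or insert a proof that every admissible pairing of edge-ends yields the bipyramid --- but that proof is precisely the corner count above, which then renders the remainder of your argument redundant.
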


\begin{proof}
The Whitehead graph for $\mathcal{H}(n,0)$ has vertices $v_0,\ldots ,v_{n-1}, v_0',\ldots ,v_{n-1}'$ and edges $v_i-v_i'$, $v_i-v_{i+1}$, $v_i'-v_{i+1}'$ for each $0 \leq  i \leq n-1$ so there is a unique planar embedding with two $n$-gons, $v_0 - v_1 - \cdots - v_{n-1} -v_0$ and $v_0' - v_1' - \cdots - v_{n-1}' -v_0'$, and~$n$ 4-gons $v_i-v_{i+1}-v_{i+1}' -v_i'-v_i$ ($0\leq i \leq n-1$).
If $\mathcal{H}(n,0)$ is a 3-manifold spine then in the face pairing polyhedron $\pi$ there is a degree~4 vertex $u$ corresponding to the
4-gon $v_0-v_{1}-v_{1}'-v_0'-v_0$. The 2-cells incident to $u$ correspond to the relators $x_0^2x_1^{-1}$, $x_1^2x_2^{-1}$, $x_0^2x_1^{-1}$, $x_0^2x_1^{-1}$ and so there are more than two 2-cells for the relator $x_0^2x_1^{-1}$, a contradiction.
\end{proof}

\begin{lemma}\label{lem:cyclicCHRnonspine}
Suppose $n\geq 1$, $(m,k)=1$, $m\neq k$, $k\neq 0$ and either $2k\equiv 0$~mod~$n$ or $2(m-k)\equiv 0$~mod~$n$. Then $\mathcal{G}_n(m,k)$ is not a 3-manifold spine.
\end{lemma}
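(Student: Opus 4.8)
\section*{Proof proposal}

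The plan is to argue as in Lemmas~\ref{lem:H94H97} and~\ref{lem:Hn0nonspine}: reduce to the closed case, and then analyse the Whitehead graph $\Gamma$ of $\mathcal{G}_n(m,k)$ together with its face-pairing polyhedron $\pi$. First I would record that the hypotheses force $n$ to be even, and that by~\cite[Lemma~3]{W} (as in the proof of Theorem~\ref{mainthm:CHRmanifoldgroup}(a)) we have $G_n(m,k)\cong\Z_s$ with $s=2^{n/2}-(-1)^{m+n/2}$. Since $2^{n/2}$ is even, $s$ is odd, so $G_n(m,k)^{\mathrm{ab}}$ is finite of odd order. Hence by~\cite[Lemma~1]{OsborneStevensIII} it suffices to prove that $K_{\mathcal{G}_n(m,k)}$ is not the spine of a \emph{closed} $3$-manifold. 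If it were, then $\Gamma$ would be connected and planar, would embed in the link $2$-sphere of the single vertex, and would give rise to the polyhedron $\pi$.

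Next I would reduce to a single case. The hypotheses $2k\equiv0$ and $2(m-k)\equiv0\pmod n$ are interchanged by the substitution $k\mapsto m-k$, which sends the edges $(v_i,v_{i+m}')$, $(v_i,v_{i+m-k})$, $(v_i',v_{i+k}')$ of $\mathcal{G}_n(m,k)$ to those of $\mathcal{G}_n(m,m-k)$ with the primed and unprimed vertices exchanged; as this induces a homeomorphism of the presentation complexes, it is enough to treat $2k\equiv0$, so $k=n/2$. Writing the relators as $x_ix_{i+m}x_{i+n/2}^{-1}$, the primed edges $(v_i',v_{i+n/2}')$ then collapse into $n/2$ double edges (bigons), the mixed edges $(v_i,v_{i+m}')$ form a perfect matching between the primed and unprimed vertices, and the unprimed edges $(v_i,v_{i+m-k})$ split into $\gcd(n,m-k)$ cycles. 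Since the presentations with $(n,k)=1$ or $(n,m-k)=1$ are already covered by the reduction to $\mathcal{H}(n,m')$, I may assume $(n,k),(n,m-k)>1$; using $(m,n/2)=1$ one finds that $n\equiv2\pmod4$ and $\gcd(n,m-k)=2$, so there are exactly two unprimed cycles, each of length $n/2$. The cyclic symmetry $x_i\mapsto x_{i+1}$ forces the matching between these two cycles to be a shift, so on contracting the bigons and subdivision vertices one obtains the $(n/2)$-gonal prism. Thus $\Gamma$ is this prism with every rung subdivided and carrying a bigon; in particular $\Gamma$ is connected, and since the prism is $3$-connected its embedding in $S^2$ is unique up to homeomorphism.

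It remains to show that this unique embedding cannot give rise to $\pi$, and this is the step I expect to be the main obstacle. The crude over-counting of Lemma~\ref{lem:Hn0nonspine} is unavailable here: a direct check shows that every face of the embedding (the two $(n/2)$-gonal caps, the $n/2$ bigons, and the $n/2$ octagonal side faces) meets each relator at most twice, so no relator is forced to carry three $2$-cells. The contradiction must instead be extracted from the directed, labelled structure of $\pi$ at the degree-$2$ vertices arising from the bigons. The two parallel edges of the bigon joining $v_a'$ and $v_{a+n/2}'$ come from the distinct relators $r_a$ and $r_{a+n/2}$, so the two $2$-cells of $\pi$ meeting at the corresponding vertex are glued along the pair of edges labelled $x_a$ and $x_{a+n/2}$; combined with the incidences forced by the two cap faces (whose boundary vertices are all unprimed), I would aim to show that these local gluings cannot be assembled into a consistent face-pairing of a $2$-sphere. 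Carrying out this local analysis carefully, and checking that it is uniform in $n$, is the crux of the argument.
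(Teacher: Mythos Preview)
Your overall strategy matches the paper's: reduce to the closed case via the odd abelianisation, reduce by symmetry to $k=n/2$, and then analyse the Whitehead graph. However, there are two genuine gaps.

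First, the reduction ``since the presentations with $(n,k)=1$ or $(n,m-k)=1$ are already covered by the reduction to $\mathcal{H}(n,m')$, I may assume $(n,k),(n,m-k)>1$'' is circular. The lemma is stated and needed in full generality; in particular, it is invoked in the proof of Theorem~\ref{mainthm:CHRspines}(b) for $\mathcal{H}(n,n/2+1)=\mathcal{G}_n(n/2+1,1)$, which after your symmetry step becomes $k=n/2$, $m=n/2+1$, and then $(n,m-k)=(n,1)=1$. So you cannot off-load this case to ``$\mathcal{H}$-results'' without circularity. Since $(m,n/2)=1$ forces $(m+n/2,n)\in\{1,2\}$, you must treat both values. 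The paper handles $(m+n/2,n)=1$ by a short non-planarity argument: relabel via $w_j=v_{(m+n/2)j}$ and contract the mixed edges to obtain the circulant on $n$ vertices with connection set $\{1,n/2\}$, which is non-planar for $n\ge 6$. Your prism analysis only applies when $(m+n/2,n)=2$.

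Second, in the case $(m+n/2,n)=2$ you correctly describe the unique planar embedding (two $n/2$-gon caps, $n/2$ bigons, $n/2$ octagons) and correctly observe that the crude over-counting of Lemma~\ref{lem:Hn0nonspine} fails, but you stop short of an actual contradiction. The paper's argument is concrete and worth knowing: the two cap faces give sink vertices $v_{\mathrm{even}},v_{\mathrm{odd}}$ of $\pi$, whose stars are $n/2$-gons $D_{\mathrm{even}},D_{\mathrm{odd}}$ with positive boundary words $W_{\mathrm{even}},W_{\mathrm{odd}}$; the bigon faces give degree-$2$ source vertices whose stars are $2$-gons $C_i$. Each of $D_{\mathrm{even}}\cup D_{\mathrm{odd}}$ and $\bigcup_i C_i$ contains exactly one $2$-cell for each relator, so $\partial\pi$ must be assembled from these pieces. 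Gluing $D_{\mathrm{even}}$ to $\bigcup_i C_i$ along matching edge labels produces an $n/2$-gon whose boundary word is the \emph{reverse} of $W_{\mathrm{odd}}$; hence it cannot be glued to $D_{\mathrm{odd}}$ to form a $2$-sphere. This is the missing ``local analysis'' you allude to, and it is uniform in $n$.
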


\begin{proof}
If $2(m-k)\equiv 0$~mod~$n$ then $\mathcal{G}_n(m,k)$ has relators $x_ix_{i+m}x_{i+m+n/2}^{-1}$. Inverting these and replacing each $x_i$ by its inverse, then cyclically permuting gives the relators $x_{i+m}x_ix_{i+m+n/2}^{-1}$. Subtracting $m$ from the subscripts gives $x_{i}x_{i-m}x_{i+n/2}^{-1}$; then negating the subscripts and setting $j=-i$ gives $x_{j}x_{j+m}x_{j+n/2}^{-1}$ which are the relators for the case $2k\equiv 0$~mod~$n$. Thus, without loss of generality, we may assume $k=n/2$ and $(m,n/2)=1$. Then the Whitehead graph $\Gamma$ has vertices $v_0,\ldots ,v_{n-1}, v_0',\ldots ,v_{n-1}'$ and edges $v_i-v_{i+m+n/2}$, $v_i'-v_{i+n/2}'$, $v_i-v_{i+m}'$ ($0 \leq i \leq n-1$). The hypothesis $(m,k)=1$ implies that $(m+n/2,n)=1$ or $2$.

Suppose that $(m+n/2,n)=1$. Then there exist integers $\alpha,\beta$ such that $\alpha(m+n/2)+\beta n=1$, so $\alpha(m+n/2)\equiv 1$~mod~$n$ and $\alpha$ is odd. For each $i$ ($0\leq i\leq n-1$) set $w_i=v_{(m+n/2)i}$, $w_i'=v_{(m+n/2)i}'$  so $v_i=w_{\alpha i}$ and $v_i'=w_{\alpha i}'$. For each $i$, writing $j=\alpha i$~mod~$n$ gives that $\Gamma$ has vertices $w_j,w_j'$ ($0\leq j\leq n-1$) and edges $(w_j,w_{j+1})$, $(w_j',w_{j+n/2}')$, $(w_j,w_{j+\alpha m}')$. Contracting the edges $(w_j,w_{j+\alpha m}')$ gives the graph with vertices $w_j$ ($0\leq j\leq n-1$) and edges $(w_j,w_{j+1})$, $(w_j,w_{j+n/2})$, which is non-planar for $n\geq 6$. Hence $\Gamma$ is non-planar and so $\mathcal{G}_n(m,k)$ is not a 3-manifold spine.

Suppose then that $(m+n/2,n)=2$. Then there is a planar embedding of $\Gamma$ with two $n/2$-gons ($v_0-v_{m+n/2}-v_{2m}-\cdots - v_{-2m}-v_{-m+n/2}-v_0$ and $v_{n/2}-v_{m}-v_{2m+n/2}-\cdots - v_{-2m+n/2}-v_{-m}-v_{n/2}$), $n/2$ 2-gons ($v_i'-v_{i+n/2}'-v_i'$, $0\leq i\leq n/2-1)$, and $n/2$ octagons ($v_i-v_{i+m+n/2}-v_{i+2m+n/2}'-v_{i+2m}'-v_{i+m}-v_{i+n/2}-v_{i+m+n/2}'-v_{i+m}'-v_i$, $0\leq i\leq n/2-1)$. Moreover, this planar embedding is unique up to self homeomorphism of $S^2$. If $\mathcal{G}_n(m,k)$ is a 3-manifold spine then in the face pairing polyhedron~$\pi$ there are two sink vertices of degree $n/2$ and $n/2$ source vertices of degree 2, and the remaining $n/2$ vertices are of degree 8 and are neither sources nor sinks.

At the $i$'th source vertex, $u_i$, the two outgoing edges have labels $x_i$ and $x_{i+n/2}$. The union of the two incident 2-cells is a 2-gon $C_i$ whose boundary label is the positive word $x_{i+m}x_{i+m+n/2}$. In particular, neither of the vertices of $C_i$ is a sink or a source vertex. The incoming edges at the two sink vertices, $v_{\mathrm{even}},v_{\mathrm{odd}}$, are, in cyclic order $x_0,x_{m+n/2},x_{2m},\ldots , x_{-m+n/2}$ and $x_1,x_{1+m+n/2},x_{1+2m},\ldots , x_{1-m+n/2}$, respectively. The union of the $n/2$ 2-cells incident at $v_{\mathrm{even}}$ (resp.\,$v_\mathrm{odd}$) is an $n/2$-gon $D_{\mathrm{even}}$ (resp.\,$D_{\mathrm{odd}}$) with boundary label $W_\mathrm{even} = x_mx_{2m+n/2}x_{3m}\ldots x_{-m}x_{n/2}$ (resp. $W_\mathrm{odd} = x_{1+m}x_{1+2m+n/2}x_{1+3m}\ldots x_{1-m}x_{1+n/2}$). Now $D_\mathrm{even}\bigcup D_\mathrm{odd}$ contains precisely one 2-cell corresponding to each of the relators, as does $\bigcup_i C_i$, so the 2-sphere $\partial \pi$ must be formed from $D_\mathrm{even}$, $D_\mathrm{odd}$ and the $C_i$ ($0\leq i\leq n/2-1$) by identifying similarly labelled 1-cells in their boundaries. However, identifying the 1-cells of $\partial D_\mathrm{even}$ with the corresponding 1-cells of the $\partial C_i$ makes $D_\mathrm{even} \cup \left( \bigcup_i C_i\right)$ into an $n/2$-gon whose boundary label is the \em reverse \em of $W_\mathrm{odd}$, that is $x_{1+n/2}x_{1-m}\ldots x_{1+3m}x_{1+2m+n/2}x_{1+m}$. Therefore it is not possible to form the 2-sphere $\partial \pi$ from this $n/2$-gon and $D_\mathrm{odd}$ by identifying the 1-cells in their boundaries, and hence $\mathcal{G}_n(m,k)$ is not a 3-manifold spine.
\end{proof}

\begin{lemma}\label{lem:H(n,1)spine}
Let $n\geq 1$. Then the presentation $\mathcal{H}(n,1)=\mathcal{G}_n(x_0x_1x_1^{-1})$ is a spine of~$S^3$.
\end{lemma}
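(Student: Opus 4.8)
The plan is to realise $K=K_{\mathcal{H}(n,1)}$ as a spine of $S^3$ by thickening it to a compact regular neighbourhood $N$ and then proving that $N$ is a $3$-ball. Recall that the relators are $r_i=x_ix_{i+1}x_{i+1}^{-1}$ $(0\le i\le n-1)$, so each generator $x_i$ occurs exactly once in $r_i$ (as its initial letter), while the pair $x_{i+1},x_{i+1}^{-1}$ contributes a backtrack. First I would record that the Whitehead graph $\Gamma$ of $\mathcal{H}(n,1)$ is connected and planar: following the general description, its edges are $(v_i,v_{i+1}')$, the loops $(v_i,v_i)$ coming from the backtracks, and $(v_i',v_{i+1}')$, so $\Gamma$ consists of the cycle $v_0'-v_1'-\cdots-v_{n-1}'-v_0'$, a pendant edge $v_i-v_{i+1}'$ at each $i$, and a loop at each $v_i$; this is plainly connected and planar. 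Planarity of $\Gamma$ guarantees that $K$ thickens to a compact $3$-manifold $N$ having $K$ as a spine; equivalently $N=h^0\cup(\bigcup_i h^1_i)\cup(\bigcup_i h^2_i)$ is assembled from a single $0$-handle, one $1$-handle $h^1_i$ for each generator $x_i$, and one $2$-handle $h^2_i$ attached along an embedded framed curve representing $r_i$ on the boundary of the handlebody $h^0\cup\bigcup_i h^1_i$. Since all handles have index at most $2$, $N$ collapses onto $K$.

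Next I would show that $N\cong B^3$ by cancelling handles. Because $x_i$ occurs exactly once in $r_i$, the attaching curve of $h^2_i$ meets the belt sphere of $h^1_i$ transversally in a single point, so the pair $(h^2_i,h^1_i)$ may be cancelled. The only other $2$-handle running over $h^1_i$ is $h^2_{i-1}$, whose relator $r_{i-1}=x_{i-1}x_ix_i^{-1}$ crosses $h^1_i$ twice; cancellation reroutes these two strands along the remainder $x_{i+1}x_{i+1}^{-1}$ of $r_i$, and the modified attaching word of $h^2_{i-1}$ still contains $x_{i-1}$ exactly once, so $h^2_{i-1}$ still meets the belt sphere of $h^1_{i-1}$ in a single point and remains cancellable. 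I would therefore cancel the pairs in the cyclic order $i=0,n-1,n-2,\dots,2$, each step leaving the next cancellation valid and affecting only the crossings over the handle being removed. This disposes of every handle except $h^1_1$ and $h^2_1$, leaving a solid torus $h^0\cup h^1_1$ together with a single modified $2$-handle $h^2_1$. Since $N$ collapses to $K$ we have $\pi_1(N)\cong H(n,1)=1$, so the attaching curve of $h^2_1$ represents a generator of $\pi_1(h^0\cup h^1_1)\cong\Z$; being an embedded curve on the boundary torus, it is then isotopic to a longitude, and attaching $h^2_1$ along it produces a $3$-ball. Hence $N\cong B^3$.

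Finally, writing $S^3=N\cup_{\partial}B^3$ as the union of $N$ with a second $3$-ball along $\partial N=S^2$, we have $N=S^3\setminus\operatorname{Int}B^3$, and $N$ collapses onto $K$; thus $K$ is a spine of $S^3$, as claimed. The main obstacle, and the step requiring the most care, is the bookkeeping in the handle-cancellation argument: one must verify that the attaching curves stay embedded and pairwise disjoint throughout (which rests on the planarity of $\Gamma$ together with the fact that handle slides preserve embeddedness), and that the residual $2$-handle $h^2_1$ is genuinely attached along a longitudinal curve of the remaining solid torus, so that the result is a ball rather than a lens space. An alternative, more elementary-looking route would be to assemble an explicit face-pairing polyhedron from a planar embedding of $\Gamma$, but the loops forced on $\Gamma$ by the backtracks make that polyhedron degenerate (its faces include monogons), so the regular-neighbourhood argument above appears to be the cleaner option.
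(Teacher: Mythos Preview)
Your approach via handle cancellation is genuinely different from the paper's. The paper constructs an explicit face-pairing polyhedron (despite the degeneracies you anticipate: it uses digons bounded by loop-edges at the south pole, so the polyhedron really does have monogon-type faces) and then verifies directly that the identifications yield a closed $3$-complex with one vertex, $n$ edges, $n$ faces and one $3$-cell, hence a manifold by the standard Euler-characteristic criterion; for $n=1$ it simply cites Zeeman's result on the dunce hat. Your handle-theoretic route has the advantage of being coordinate-free and of explaining \emph{why} the construction succeeds (each relator uses its ``own'' generator exactly once, so the $2$-handles cancel against the $1$-handles in turn), whereas the paper's argument is more explicit and entirely self-contained.

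Two points need tightening. First, the case $n=1$ is not covered by your cancellation scheme: with subscripts read mod~$1$ the single relator is $x_0x_0x_0^{-1}$, so $x_0$ occurs three times rather than once, and the pair $(h^2_0,h^1_0)$ does not cancel immediately. You should treat this case separately (it is precisely the dunce hat). Second, the assertion that the residual attaching curve ``is then isotopic to a longitude'' is not quite right: an embedded curve on $\partial(h^0\cup h^1_1)$ representing a generator of $\pi_1\cong\Z$ is a $(1,q)$-curve for some $q$, and on the torus such curves are isotopic to the longitude only when $q=0$. The conclusion is still correct, and the fix is immediate --- either note that a Dehn twist along the meridian disk of the solid torus carries any $(1,q)$-curve to a $(1,0)$-curve, or (more in keeping with your argument) observe that a $(1,q)$-curve meets the belt sphere of $h^1_1$ in a single point, so the final pair $(h^2_1,h^1_1)$ also cancels, leaving only $h^0=B^3$. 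With these two adjustments your proof is complete.
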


\begin{proof}
If $n=1$ then $\mathcal{H}=\mathcal{H}(n,1)=\pres{x_0}{x_0x_0x_0^{-1}}$ which is a spine of $S^3$ by~\cite{Zeeman}, so assume $n\geq 2$. Consider the polyhedron in Figure~\ref{fig:H(n,1)Polyhedron} with face pairings as shown. We shall show that the identification of faces results in a 3-complex $M$ whose 2-skeleton is the presentation complex of $\mathcal{H}$, so $M$ has one $0$-cell, $n$ 1-cells, $n$ 2-cells and one $3$-cell, so is a manifold by~\cite[Theorem~I, Section~60]{SeifertThrelfall}. To this end note that all edges labelled $x_1$, namely $[S,S]_1$,$[N,S]_1$,$[S,v_1]$ are identified by the following cycle of faces identifications:
\[ [S,v_1] \stackrel{F_0}{\longleftrightarrow} [S,S]_1 \stackrel{F_1}{\longleftrightarrow} [N,S]_1 \stackrel{F_0}{\longleftrightarrow} [S,v_1]. \]
By symmetry, for each $0\leq i\leq n-1$, all edges labelled $x_i$, namely $[S,S]_i$, $[N,S]_i$, $[S,v_i]$ are similarly identified. The initial vertices of the edges labelled $x_i$, namely $N,S$ are identified and the terminal vertices of the edges labelled $x_i$, namely $S,v_i$ are identified. Therefore the resulting complex has one $0$-cell, $n$ $1$-cells, $n$ $2$-cells, and one $3$-cell, and so its 2-skeleton is the presentation complex of $\mathcal{H}$, and the resulting manifold is~$S^3$.
\end{proof}

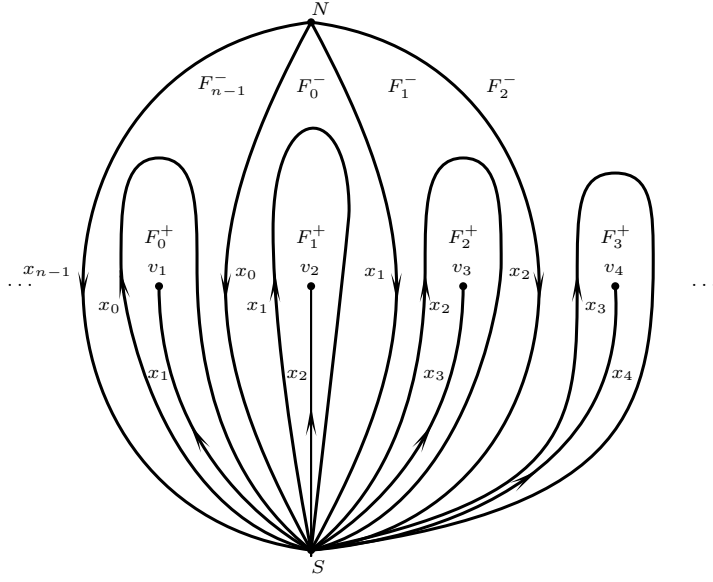
\begin{figure}
\psset{xunit=0.5cm,yunit=0.5cm,algebraic=true,dimen=middle,dotstyle=o,dotsize=5pt 0,linewidth=0.8pt,arrowsize=4pt ,arrowinset=2}
{
\begin{pspicture}(-5.5,-7.5)(7.875405,6)
\begin{scriptsize}
\psdots[dotsize=3pt,dotstyle=*,linecolor=black](6,7)
\rput[bl](6,7.2){\black{$N$}}

\psdots[dotsize=3pt,dotstyle=*,linecolor=black](6,-7)
\rput[bl](6,-7.6){\black{$S$}}
\psbezier[ArrowInside=->,linecolor=black, linewidth=0.04]
(6,7)(-2,6)(-2,-6)(6,-7)
\psbezier[ArrowInside=->,linecolor=black, linewidth=0.04]
(6,7)(3,1.4)(3,-1.5)(6,-7)
\psbezier[ArrowInside=->,linecolor=black, linewidth=0.04]
(6,7)(14,6)(14,-6)(6,-7)
\psbezier[ArrowInside=->,linecolor=black, linewidth=0.04]
(6,7)(9,1.4)(9,-1.5)(6,-7)
\psbezier[linecolor=black, linewidth=0.04]
(6,-7)(6,-7)
(5,-2)(5,1.5)
(5,5)(7,5)
(7,2)(7,1.5)
(6,-7)(6,-7)
\psbezier[linecolor=black, linewidth=0.04]
(6,-7)(2,-6)(1,-0.6)(1,0.6)(1,2)(1,3.4)(2,3.4)(3,3.4)(3,2)(3,0.6)(3,-0.6)(3,-4.6)(6,-7)
\psbezier[linecolor=black, linewidth=0.04]
(6,-7)(10,-6)(11,-0.6)(11,0.6)(11,2)(11,3.4)(10,3.4)(9,3.4)(9,2)(9,0.6)(9,-0.6)(9,-4.6)(6,-7)
\psbezier[linecolor=black, linewidth=0.04]
 (6,-7)(14,-6)(15,-3.6)(15,0.6)(15,2)(15,3.0)(14,3.0)(13,3.0)(13,2)(13,0.2)(13,-3.6)(13,-5.6)(6,-7)

\psline[ArrowInside=->](6,-7)(6,0)
\psdots[dotsize=3pt,dotstyle=*,linecolor=black](6,0)
\rput[bl](5.7,0.3){\black{$v_2$}}
\psline[ArrowInside=->](5.05,0.0)(5.05,0.1)

\rput[bl](5.6,1){\black{$F_{1}^+$}}
\rput[bl](5.6,5){\black{$F_{0}^-$}}

\psbezier[ArrowInside=->,linecolor=black, linewidth=0.04]
(6,-7)(6,-7)(2,-5)(2,0)
\psdots[dotsize=3pt,dotstyle=*,linecolor=black](2,0)
\rput[bl](1.7,0.3){\black{$v_1$}}
\rput[bl](1.6,1){\black{$F_{0}^+$}}
\rput[bl](3.0,5){\black{$F_{n-1}^-$}}
\psline[ArrowInside=->](1.04,0.0)(0.99,0.28)

\psbezier[ArrowInside=->,linecolor=black, linewidth=0.04](6,-7)(6,-7)(10,-5)(10,0)
\psdots[dotsize=3pt,dotstyle=*,linecolor=black](10,0)
\rput[bl](9.7,0.3){\black{$v_3$}}
\rput[bl](9.6,1){\black{$F_{2}^+$}}
\rput[bl](8.0,5){\black{$F_{1}^-$}}
\psline[ArrowInside=->](9.0,0.0)(9.0,0.1)

\psbezier[ArrowInside=->,linecolor=black, linewidth=0.04]
(6,-7)(6,-7)(14.5,-6.5)(14,0)
\psdots[dotsize=3pt,dotstyle=*,linecolor=black](14,0)
\rput[bl](13.7,0.3){\black{$v_4$}}
\rput[bl](13.6,1){\black{$F_{3}^+$}}
\rput[bl](10.6,5){\black{$F_{2}^-$}}
\psline[ArrowInside=->](13.0,0.0)(13.0,0.1)

\rput[bl](16,0){\black{$\ldots$}}

\rput[bl](-2,0){\black{$\ldots$}}

\rput[bl](-1.58,0.2){\black{$x_{n-1}$}}
\rput[bl](4.0,0.2){\black{$x_{0}$}}
\rput[bl](7.4,0.2){\black{$x_{1}$}}
\rput[bl](11.2,0.2){\black{$x_{2}$}}

\rput[bl](0.4,-0.7){\black{$x_{0}$}}
\rput[bl](4.3,-0.7){\black{$x_{1}$}}
\rput[bl](9.1,-0.7){\black{$x_{2}$}}
\rput[bl](13.2,-0.7){\black{$x_{3}$}}

\rput[bl](1.7,-2.5){\black{$x_1$}}
\rput[bl](5.35,-2.5){\black{$x_2$}}
\rput[bl](8.95,-2.5){\black{$x_3$}}
\rput[bl](13.9,-2.5){\black{$x_4$}}
\end{scriptsize}
\end{pspicture}}

\caption{Face pairing polyhedron for the presentation $\mathcal{H}(n,1)=\mathcal{G}_n(x_0x_1x_1^{-1})$\label{fig:H(n,1)Polyhedron}.}
\end{figure}

\begin{proof}[Proof of Theorem~\ref{mainthm:CHRspines}] (a) By Theorem~\ref{mainthm:CHRmanifoldgroup} we may assume that $(m,k)=1$ and either $2k\equiv 0$~mod~$n$ or $2(m-k)\equiv 0$~mod~$n$, in which case $\mathcal{G}_n(m,k)$ is not a 3-manifold spine by Lemma~\ref{lem:cyclicCHRnonspine}.

(b) If $(n,m)=(9,4)$ or $(9,7)$ then $\mathcal{H}(n,m)$ is not a 3-manifold spine by Lemma~\ref{lem:H94H97}, so assume $(n,m)\neq (9,4),(9,7)$.
By Theorem~\ref{mainthm:CHRmanifoldgroup} we may assume that $m=0$ or $m=1$ or $m=2$ or ($m=n-1$ and $n=3,5,7$ or $n\geq 4$ is even) or $(n,m)=(5,3)$ or $(7,4)$ or $m=n/2+1$ (where $n\geq 4$ is even).
In the case $m=0$ the presentation $\mathcal{H}(n,m)$ is not a 3-manifold spine by Lemma~\ref{lem:Hn0nonspine}. In the cases $m=1$ and $m=2$ the presentation $\mathcal{H}(n,m)$ is a 3-manifold spine by Lemma~\ref{lem:H(n,1)spine} and Theorem~\ref{thm:SieradskiSpine}, respectively.
In the case $m=n-1$ we have that $\mathcal{H}(n,m)$ is homeomorphic to $\mathcal{F}(2,n)$ so when $n\geq 4$ is even
then it is a 3-manifold spine by Theorem~\ref{thm:evenFibonacciSpine} and if $n=3,5$ or $7$ then it is a 3-manifold spine if and only if $n=3$ by~\cite[page~55]{CavicchioliSpaggiari}. In the cases $(n,m)=(5,3)$ or $(7,4)$ we have that $\mathcal{H}(n,m)$ is homeomorphic to $\mathcal{F}(2,5)$ or $\mathcal{F}(2,7)$, so again is not a 3-manifold spine. In the case $m=n/2+1$ we have that the presentation $\mathcal{H}(n,m)$ is not a 3-manifold spine by Lemma~\ref{lem:cyclicCHRnonspine}.
\end{proof}

\section{Fibonacci and Sieradski manifolds revisited}

In~\cite[Example~1.2]{KimVesnin97} the presentation $\mathcal{G}_m(x_0^{-1}x_{1}^{2}x_{2}^{-1}x_{1})$ of the Fibonacci group $F(2,2m)$ was obtained. In the same way we can obtain an alternative presentation of the Sieradski group $S(2,2m)$:
\begin{alignat*}{1}
S(2,2m)
&=\pres{x_i}{x_ix_{i+2}=x_{i+1}\ (0\leq i\leq 2m-1)}\\
&=\pres{x_{2j},x_{2j+1}}{x_{2j}x_{2j+2}=x_{2j+1},x_{2j+1}x_{2j+3}=x_{2j+2}\ (0\leq j\leq m-1)}\\
&=\pres{x_{2j}}{(x_{2j}x_{2j+2})(x_{2j+2}x_{2j+4})=x_{2j+2}\ (0\leq j\leq m-1)}\\
&=\pres{y_{j}}{(y_{j}y_{j+1})(y_{j+1}y_{j+2})=y_{j+1}\ (0\leq j\leq m-1)}\\
&=\mathcal{G}_m(y_0y_1^2y_2y_1^{-1}).
\end{alignat*}

\begin{maintheorem}\label{mainthm:alternativepresspines}
For each $m\geq 3$ the cyclic presentation $\mathcal{G}_m(x_0^{-1}x_{1}^{2}x_{2}^{-1}x_{1})$ of the Fibonacci group $F(2,2m)$ and the cyclic presentation $\mathcal{G}_m(x_0x_1^2x_2x_1^{-1})$ of the Sieradski group $S(2,2m)$ are spines of closed 3-manifolds.
\end{maintheorem}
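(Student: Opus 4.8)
The plan is to follow the strategy of Lemma~\ref{lem:H(n,1)spine}: for each of the two cyclic presentations I would construct an explicit face-pairing polyhedron whose quotient under the face identifications is a closed 3-manifold with 2-skeleton equal to the given presentation complex. Both relators have length five, so in each case the polyhedron $\pi$ is a 3-ball whose boundary 2-sphere is tiled by $2m$ pentagons, two carrying the boundary word $\theta^i(w)^{\pm 1}$ for each of the $m$ relators $\theta^i(w)$. A preliminary Euler-characteristic count fixes the combinatorics in advance: a spine must produce one $0$-cell, $m$ $1$-cells (one per generator), $m$ $2$-cells (one per relator) and a single $3$-cell, so the boundary sphere of $\pi$ must carry $5m$ edges and $3m+2$ vertices, and those $5m$ edges must fall into exactly $m$ identification classes, one labelled by each $x_j$ and each of size five (matching the fact that each generator occurs five times across the relator orbit). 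This tells me precisely what the gluing pattern has to achieve.

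First I would exhibit the polyhedron together with its face pairing explicitly, exploiting the $\Z_m$ symmetry of the presentation to describe it from a single fundamental wedge and its translates under $\theta$, as in Figure~\ref{fig:H(n,1)Polyhedron}. Second, for each generator $x_j$ I would trace the cycle of face identifications that amalgamates all edges labelled $x_j$ into a single oriented loop (the analogue of the cycle $[S,v_1]\stackrel{F_0}{\longleftrightarrow}[S,S]_1\stackrel{F_1}{\longleftrightarrow}[N,S]_1\stackrel{F_0}{\longleftrightarrow}[S,v_1]$ displayed in Lemma~\ref{lem:H(n,1)spine}), and check simultaneously that the initial and terminal endpoints of these edges are all identified, so that the quotient has a single vertex. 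Third, having confirmed one $0$-cell, $m$ $1$-cells, $m$ $2$-cells and one $3$-cell, I would invoke \cite[Theorem~I, Section~60]{SeifertThrelfall}: the quotient is a closed pseudomanifold with a single vertex and Euler characteristic $1-m+m-1=0$, which forces the link of that vertex to be a $2$-sphere, hence the quotient is a closed $3$-manifold; and its $2$-skeleton is by construction the presentation complex of $\mathcal{G}_m(w)$. This shows each presentation is a $3$-manifold spine.

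I would treat the Sieradski and Fibonacci cases in parallel, since the two relators differ only by the signs on $x_0$ and $x_2$, and both describe manifolds already seen in Theorems~\ref{thm:SieradskiSpine} and~\ref{thm:evenFibonacciSpine} (the branched cyclic covers of the trefoil and the figure-eight); indeed the alternative presentations arise exactly by using the relations $x_{2j}x_{2j+2}=x_{2j\pm 1}$ to eliminate the odd-indexed generators, so the expected outcome is that these are efficient spines of the same branched covers. This dictionary between the two relator orbits should let me transport the gluing data from one case to the other with only sign changes, so that essentially one verification suffices.

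The main obstacle is the second step: writing down a face pairing for which the edge cycles close up correctly \emph{and} all vertices collapse to one. Producing some tiling of $S^2$ by $2m$ pentagons carrying the words $\theta^i(w)^{\pm 1}$ is easy, but a generic such pairing yields several vertex classes (so a pseudomanifold that is not a manifold) or the wrong number of edge classes, and then the hypotheses of \cite[Theorem~I, Section~60]{SeifertThrelfall} fail. The delicate point is to arrange the gluing so that its $\Z_m$-equivariance reduces the whole check to a bounded computation in a single fundamental domain, after which the $m$-fold symmetry propagates it; this is precisely where care is required.
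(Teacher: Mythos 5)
Your outline matches the paper's strategy exactly: the paper proves this theorem by exhibiting explicit face-pairing polyhedra (its Figures~\ref{fig:AltFibonacciPolyhedron} and~\ref{fig:AltSieradskiPolyhedron}), tracing the cycle of face identifications that amalgamates the five edges labelled $x_i$ into one edge class, checking that all initial (resp.\ terminal) vertices of those edges are identified so the quotient has a single $0$-cell, and then invoking \cite[Theorem~I, Section~60]{SeifertThrelfall}. Your Euler-characteristic bookkeeping ($2m$ pentagons, $5m$ boundary edges in $m$ classes of five, one vertex class) is the correct target, and your appeal to Seifert--Threlfall to conclude that the quotient is a closed manifold is the same final step the paper takes.

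However, the proposal has a genuine gap, and it is the one you yourself flag as ``the main obstacle'': you never actually construct the face-pairing polyhedron. The entire mathematical content of this theorem is the existence of a specific gluing of $2m$ pentagons on $S^2$ whose edge cycles close up into $m$ classes and whose vertices all collapse to one; knowing that such a gluing \emph{must} have these properties if it exists does not produce one, and a generic pairing of pentagons carrying the words $\theta^i(w)^{\pm 1}$ will fail (as you note). The paper resolves this by writing down concrete polyhedra with labelled vertices $N,S,u_i,v_i,w_i$ and verifying, e.g., the cycle $[N,v_1]\leftrightarrow[u_2,S]\leftrightarrow[w_2,v_2]\leftrightarrow[w_1,w_2]\leftrightarrow[u_1,w_1]\leftrightarrow[N,v_1]$ for $x_2$ in the Fibonacci case. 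A further caution: your hope that the Sieradski case follows from the Fibonacci case ``with only sign changes'' is optimistic --- in the paper the two polyhedra have genuinely different combinatorics (the vertex identification patterns for the edges labelled $x_i$ are $u_i,u_{i-1},w_i,w_{i-1},N$ versus $v_{i-1},w_{i-1},u_i,v_i,N$), so two separate verifications are carried out. As it stands your proposal is a correct plan of attack but not a proof.
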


\begin{proof}[Proof of Theorem~\ref{mainthm:alternativepresspines}]
Consider first the presentation $\mathcal{G}=\mathcal{G}_m(x_0^{-1}x_{1}^{2}x_{2}^{-1}x_{1})$ of the Fibonacci group $F(2,2m)$ and consider the polyhedron in Figure~\ref{fig:AltFibonacciPolyhedron} with face pairings as shown. We shall show that the identification of faces results in a 3-complex $M$ whose 2-skeleton is the presentation complex of $\mathcal{G}$, so $M$ has one $0$-cell, $m$ 1-cells, $m$ 2-cells and one $3$-cell, so is a manifold by~\cite[Theorem~I, Section~60]{SeifertThrelfall}. To this end note that all edges labelled $x_2$ are identified by the following cyclic of faces identifications:
\[ [N,v_1] \stackrel{F_1}{\longleftrightarrow} [u_2,S] \stackrel{F_2}{\longleftrightarrow} [w_2,v_2] \stackrel{F_1}{\longleftrightarrow} [w_1,w_2]  \stackrel{F_1}{\longleftrightarrow} [u_1,w_1] \stackrel{F_0}{\longleftrightarrow} [N,v_1]. \]
By symmetry, for each $0\leq i\leq m-1$, all edges labelled $x_i$ are similarly identified. The initial vertices of the edges labelled $x_i$, namely $u_i,u_{i-1},w_i,w_{i-1},N$ are identified and the terminal vertices of the edges labelled $x_i$, namely $v_i,v_{i-1},w_i,w_{i-1},S$ are identified. Therefore the resulting complex has one $0$-cell, $m$ $1$-cells, $m$ $2$-cells, and one $3$-cell, and so its 2-skeleton is the presentation complex of $\mathcal{G}$, as required.

Consider now the presentation $\mathcal{G}_n(x_0x_1^2x_2x_1^{-1})$ of the Sieradski group $S(2,2m)$ and consider the polyhedron in Figure~\ref{fig:AltSieradskiPolyhedron}. We argue as above. The edges labelled $x_3$ are identified by the following cycle of face identifications:
\[ [N,u_4] \stackrel{F_2}{\longleftrightarrow} [v_2,S] \stackrel{F_1}{\longleftrightarrow} [w_2,u_3] \stackrel{F_2}{\longleftrightarrow} [u_3,v_3]  \stackrel{F_2}{\longleftrightarrow} [v_3,w_3] \stackrel{F_3}{\longleftrightarrow} [N,u_4]. \]
By symmetry, for each $0\leq i\leq m-1$, all edges labelled $x_i$ are similarly identified. The initial vertices of the edges labelled $x_i$, namely $v_{i-1},w_{i-1},u_i,v_i,N$ are identified and the terminal vertices of the edges labelled $x_i$, namely $u_{i+1},u_i,v_i,w_i,S$ are identified.
\end{proof}


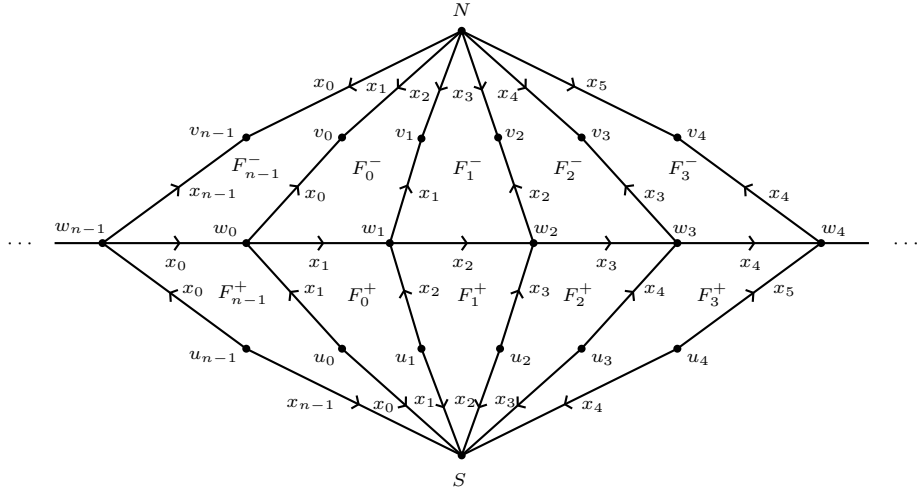
\begin{figure}
\vspace{-0.5cm}
\psset{xunit=0.63cm,yunit=0.7cm,algebraic=true,dimen=middle,dotstyle=o,dotsize=5pt 0,linewidth=0.8pt,arrowsize=3pt 2,arrowinset=0.25}
\begin{pspicture*}(-2.5,-5)(29.28,5)
\psline(0.,0.)(3.,0.)
\psline(1.605,0.)(1.5,-0.135)
\psline(1.605,0.)(1.5,0.135)
\psline(3.,0.)(6.,0.)
\psline(4.605,0.)(4.5,-0.135)
\psline(4.605,0.)(4.5,0.135)
\psline(6.,0.)(9.,0.)
\psline(7.605,0.)(7.5,-0.135)
\psline(7.605,0.)(7.5,0.135)
\psline(9.,0.)(12.,0.)
\psline(10.605,0.)(10.5,-0.135)
\psline(10.605,0.)(10.5,0.135)
\psline(12.,0.)(15.,0.)
\psline(13.605,0.)(13.5,-0.135)
\psline(13.605,0.)(13.5,0.135)
\psline(7.5,4.02)(3.,2.)
\psline(5.154208452700762,2.967000238767897)(5.194714592701582,3.133160560813306)
\psline(5.154208452700762,2.967000238767897)(5.305285407298417,2.8868394391866934)
\psline(7.5,4.02)(5.,2.)
\psline(6.16832846112781,2.9440093965912695)(6.165154938474491,3.1150062642642444)
\psline(6.16832846112781,2.9440093965912695)(6.334845061525511,2.9049937357357547)
\psline(7.5,4.02)(6.66,1.98)
\psline(7.040021270448299,2.9029087996601546)(6.955168456705913,3.0514012237093295)
\psline(7.040021270448299,2.9029087996601546)(7.204831543294086,2.94859877629067)
\psline(7.5,4.02)(8.26,2.)
\psline(7.916974571466961,2.911725481100975)(7.753647047129825,2.9624612652567657)
\psline(7.916974571466961,2.911725481100975)(8.006352952870175,3.0575387347432335)
\psline(7.5,4.02)(10.,2.)
\psline(8.83167153887219,2.9440093965912695)(8.665154938474489,2.9049937357357547)
\psline(8.83167153887219,2.9440093965912695)(8.83484506152551,3.1150062642642444)
\psline(7.5,4.02)(12.,2.)
\psline(9.845791547299239,2.967000238767897)(9.694714592701583,2.8868394391866934)
\psline(9.845791547299239,2.967000238767897)(9.805285407298419,3.133160560813306)
\psline(0.,0.)(3.,2.)
\psline(1.587365280905474,1.0582435206036493)(1.574884526490406,0.8876732102643915)
\psline(1.587365280905474,1.0582435206036493)(1.425115473509594,1.1123267897356093)
\psline(3.,0.)(5.,2.)
\psline(4.074246212024588,1.0742462120245873)(4.095459415460184,0.9045405845398158)
\psline(4.074246212024588,1.0742462120245873)(3.904540584539816,1.095459415460184)
\psline(6.,0.)(6.66,1.98)
\psline(6.363203915431768,1.089611746295304)(6.45807224523682,0.947309251587727)
\psline(6.363203915431768,1.089611746295304)(6.20192775476318,1.0326907484122738)
\psline(9.,0.)(8.26,2.)
\psline(8.593564067411835,1.0984754934815293)(8.756611348761966,1.0468461990419278)
\psline(8.593564067411835,1.0984754934815293)(8.503388651238035,0.953153800958073)
\psline(12.,0.)(10.,2.)
\psline(10.925753787975413,1.0742462120245873)(11.095459415460184,1.095459415460184)
\psline(10.925753787975413,1.0742462120245873)(10.904540584539818,0.9045405845398158)
\psline(15.,0.)(12.,2.)
\psline(13.412634719094527,1.0582435206036493)(13.574884526490406,1.1123267897356093)
\psline(13.412634719094527,1.0582435206036493)(13.425115473509592,0.8876732102643915)
\psline(3.,-2.)(7.5,-4.02)
\psline(5.345791547299239,-3.052999761232101)(5.194714592701583,-3.133160560813305)
\psline(5.345791547299239,-3.052999761232101)(5.305285407298418,-2.886839439186693)
\psline(5.,-2.)(7.5,-4.02)
\psline(6.3316715388721905,-3.075990603408728)(6.165154938474489,-3.1150062642642427)
\psline(6.3316715388721905,-3.075990603408728)(6.33484506152551,-2.904993735735753)
\psline(6.66,-2.)(7.5,-4.02)
\psline(7.1203164457271715,-3.1069514528200983)(6.955348132088443,-3.061835430220646)
\psline(7.1203164457271715,-3.1069514528200983)(7.20465186791156,-2.9581645697793504)
\psline(8.3,-2.)(7.5,-4.02)
\psline(7.86133750425491,-3.1076228017563547)(7.774484969170401,-2.960291076899167)
\psline(7.86133750425491,-3.1076228017563547)(8.025515030829602,-3.05970892310083)
\psline(10.,-2.)(7.5,-4.02)
\psline(8.66832846112781,-3.075990603408728)(8.66515493847449,-2.904993735735753)
\psline(8.66832846112781,-3.075990603408728)(8.834845061525511,-3.1150062642642427)
\psline(3.,-2.)(0.,0.)
\psline(1.4126347190945263,-0.9417564793963503)(1.574884526490406,-0.8876732102643904)
\psline(1.4126347190945263,-0.9417564793963503)(1.425115473509594,-1.1123267897356082)
\psline(5.,-2.)(3.,0.)
\psline(3.9257537879754127,-0.9257537879754123)(4.095459415460184,-0.9045405845398158)
\psline(3.9257537879754127,-0.9257537879754123)(3.904540584539816,-1.095459415460184)
\psline(6.66,-2.)(6.,0.)
\psline(6.297095367308591,-0.9002889918442132)(6.458199867628869,-0.9576940436824735)
\psline(6.297095367308591,-0.9002889918442132)(6.201800132371131,-1.0423059563175274)
\psline(8.3,-2.)(9.,0.)
\psline(8.68468679459645,-0.9008948725815684)(8.777420878109412,-1.0445973073382946)
\psline(8.68468679459645,-0.9008948725815684)(8.522579121890587,-0.9554026926617063)
\psline(10.,-2.)(12.,0.)
\psline(11.074246212024587,-0.9257537879754123)(11.095459415460182,-1.095459415460184)
\psline(11.074246212024587,-0.9257537879754123)(10.904540584539816,-0.9045405845398158)
\psline(-1.,0.)(0.,0.)
\psline(15.,0.)(16.,0.)
\psline(12.,-2.)(15.,0.)
\psline(13.587365280905473,-0.9417564793963503)(13.574884526490408,-1.1123267897356082)
\psline(13.587365280905473,-0.9417564793963503)(13.425115473509596,-0.8876732102643904)
\psline(12.,-2.)(7.5,-4.02)
\psline(9.654208452700761,-3.052999761232101)(9.694714592701583,-2.886839439186693)
\psline(9.654208452700761,-3.052999761232101)(9.805285407298419,-3.133160560813305)
\begin{scriptsize}
\psdots[dotsize=3pt,dotstyle=*,linecolor=black](0.,0.)
\psdots[dotsize=3pt,dotstyle=*,linecolor=black](3.,0.)
\rput[bl](1.3,-0.5){$x_{0}$}
\psdots[dotsize=3pt,dotstyle=*,linecolor=black](6.,0.)
\rput[bl](5.4,0.14){\black{$w_1$}}
\rput[bl](2.3,0.14){\black{$w_0$}}
\rput[bl](-1.0,0.14){\black{$w_{n-1}$}}
\rput[bl](11.9,0.14){\black{$w_3$}}
\rput[bl](15.0,0.14){\black{$w_4$}}
\rput[bl](4.3,-0.5){$x_1$}
\psdots[dotsize=3pt,dotstyle=*,linecolor=black](9.,0.)
\rput[bl](9.0,0.14){\black{$w_2$}}
\rput[bl](7.3,-0.5){$x_2$}
\psdots[dotsize=3pt,dotstyle=*,linecolor=black](12.,0.)
\rput[bl](10.3,-0.5){$x_3$}
\psdots[dotsize=3pt,dotstyle=*,linecolor=black](15.,0.)
\rput[bl](13.3,-0.5){$x_4$}
\psdots[dotsize=3pt,dotstyle=*,linecolor=black](7.5,4.02)
\rput[bl](7.3,4.3){\black{$N$}}
\psdots[dotsize=3pt,dotstyle=*,linecolor=black](3.,2.)
\rput[bl](4.4,2.9){$x_0$}
\psdots[dotsize=3pt,dotstyle=*,linecolor=black](5.,2.)
\rput[bl](5.5,2.84){$x_1$}
\psdots[dotsize=3pt,dotstyle=*,linecolor=black](6.66,1.98)
\rput[bl](6.1,1.96){\black{$v_1$}}
\rput[bl](6.4,2.7){$x_2$}
\psdots[dotsize=3pt,dotstyle=*,linecolor=black](8.26,2.)
\rput[bl](8.4,1.96){\black{$v_2$}}
\rput[bl](1.8,1.96){\black{$v_{n-1}$}}
\rput[bl](4.4,1.96){\black{$v_0$}}
\rput[bl](10.2,1.96){\black{$v_3$}}
\rput[bl](12.2,1.96){\black{$v_4$}}
\rput[bl](7.3,2.7){$x_3$}
\psdots[dotsize=3pt,dotstyle=*,linecolor=black](10.,2.)
\rput[bl](8.24,2.7){$x_4$}
\psdots[dotsize=3pt,dotstyle=*,linecolor=black](12.,2.)
\rput[bl](10.1,2.9){$x_5$}
\rput[bl](1.8,0.8){$x_{n-1}$}
\rput[bl](4.22,0.8){$x_0$}
\rput[bl](6.6,0.8){$x_1$}
\rput[bl](8.9,0.8){$x_2$}
\rput[bl](11.3,0.8){$x_3$}
\rput[bl](13.9,0.8){$x_4$}
\psdots[dotsize=3pt,dotstyle=*,linecolor=black](7.5,-4.02)
\rput[bl](7.3,-4.6){\black{$S$}}
\psdots[dotsize=3pt,dotstyle=*,linecolor=black](3.,-2.)
\psdots[dotsize=3pt,dotstyle=*,linecolor=black](5.,-2.)
\psdots[dotsize=3pt,dotstyle=*,linecolor=black](6.66,-2.)
\rput[bl](6.1,-2.3){\black{$u_1$}}
\psdots[dotsize=3pt,dotstyle=*,linecolor=black](8.3,-2.)
\rput[bl](8.5,-2.3){\black{$u_2$}}
\rput[bl](4.4,-2.3){\black{$u_0$}}
\rput[bl](10.2,-2.3){\black{$u_3$}}
\rput[bl](12.2,-2.3){\black{$u_4$}}
\rput[bl](1.8,-2.3){\black{$u_{n-1}$}}
\psdots[dotsize=3pt,dotstyle=*,linecolor=black](10.,-2.)
\rput[bl](3.84,-3.2){$x_{n-1}$}
\rput[bl](5.64,-3.2){$x_0$}
\rput[bl](6.5,-3.1){$x_1$}
\rput[bl](7.34,-3.1){$x_2$}
\rput[bl](8.18,-3.1){$x_3$}
\rput[bl](10.0,-3.2){$x_4$}
\rput[bl](1.66,-1.0){$x_0$}
\rput[bl](4.2,-1.0){$x_1$}
\rput[bl](6.6,-1.0){$x_2$}
\rput[bl](8.9,-1.0){$x_3$}
\rput[bl](11.3,-1.0){$x_4$}
\rput[bl](14.0,-1.0){$x_5$}
\rput[bl](-2.0,-0.1){\black{$\cdots$}}
\rput[bl](16.5,-0.1){\black{$\cdots$}}
\psdots[dotsize=3pt,dotstyle=*,linecolor=black](12.,-2.)
\rput[bl](2.68,1.2){\black{$F_{n-1}^-$}}
\rput[bl](5.2,1.2){\black{$F_0^-$}}
\rput[bl](7.3,1.2){\black{$F_1^-$}}
\rput[bl](9.4,1.2){\black{$F_2^-$}}
\rput[bl](11.8,1.2){\black{$F_3^-$}}
\rput[bl](12.4,-1.2){\black{$F_3^+$}}
\rput[bl](9.6,-1.2){\black{$F_2^+$}}
\rput[bl](7.4,-1.2){\black{$F_1^+$}}
\rput[bl](5.1,-1.2){\black{$F_0^+$}}
\rput[bl](2.4,-1.2){\black{$F_{n-1}^+$}}
\end{scriptsize}
\end{pspicture*}

\caption{Face pairing polyhedron for the presentation $\mathcal{G}_m(x_0^{-1}x_1^2x_2^{-1}x_1)$ of the Fibonacci group~$F(2,2m)$\label{fig:AltFibonacciPolyhedron}.}
\end{figure}

\begin{figure}
\vspace{-0.5cm}

\psset{xunit=0.63cm,yunit=0.7cm,algebraic=true,dimen=middle,dotstyle=o,dotsize=5pt 0,linewidth=0.8pt,arrowsize=3pt 2,arrowinset=0.25}
\begin{pspicture*}(-2,-5)(29.78,5)
\psline(0.,0.)(1.,0.)
\psline(0.605,0.)(0.5,-0.135)
\psline(0.605,0.)(0.5,0.135)
\psline(1.,0.)(2.,0.)
\psline(1.605,0.)(1.5,-0.135)
\psline(1.605,0.)(1.5,0.135)
\psline(2.,0.)(3.,0.)
\psline(2.605,0.)(2.5,-0.135)
\psline(2.605,0.)(2.5,0.135)
\psline(3.,0.)(4.,0.)
\psline(3.605,0.)(3.5,-0.135)
\psline(3.605,0.)(3.5,0.135)
\psline(4.,0.)(5.,0.)
\psline(4.605,0.)(4.5,-0.135)
\psline(4.605,0.)(4.5,0.135)
\psline(5.,0.)(6.,0.)
\psline(5.605,0.)(5.5,-0.135)
\psline(5.605,0.)(5.5,0.135)
\psline(6.,0.)(7.,0.)
\psline(6.605,0.)(6.5,-0.135)
\psline(6.605,0.)(6.5,0.135)
\psline(7.,0.)(8.,0.)
\psline(7.605,0.)(7.5,-0.135)
\psline(7.605,0.)(7.5,0.135)
\psline(8.,0.)(9.,0.)
\psline(8.605,0.)(8.5,-0.135)
\psline(8.605,0.)(8.5,0.135)
\psline(9.,0.)(10.,0.)
\psline(9.605,0.)(9.5,-0.135)
\psline(9.605,0.)(9.5,0.135)
\psline(10.,0.)(11.,0.)
\psline(10.605,0.)(10.5,-0.135)
\psline(10.605,0.)(10.5,0.135)
\psline(11.,0.)(12.,0.)
\psline(11.605,0.)(11.5,-0.135)
\psline(11.605,0.)(11.5,0.135)
\psline(12.,0.)(13.,0.)
\psline(12.605,0.)(12.5,-0.135)
\psline(12.605,0.)(12.5,0.135)
\psline(13.,0.)(14.,0.)
\psline(13.605,0.)(13.5,-0.135)
\psline(13.605,0.)(13.5,0.135)
\psline(14.,0.)(15.,0.)
\psline(14.605,0.)(14.5,-0.135)
\psline(14.605,0.)(14.5,0.135)
\psline(8.,4.)(0.,0.)
\psline(3.9060851449450094,1.953042572472505)(3.9396261646075064,2.1207476707849895)
\psline(3.9060851449450094,1.953042572472505)(4.060373835392495,1.879252329215012)
\psline(8.,4.)(3.,0.)
\psline(5.418008775008482,1.9344070200067855)(5.415666168580152,2.1054172892748095)
\psline(5.418008775008482,1.9344070200067855)(5.584333831419847,1.8945827107251914)
\psline(8.,4.)(6.,0.)
\psline(6.953042572472505,1.9060851449450091)(6.879252329215013,2.0603738353924945)
\psline(6.953042572472505,1.9060851449450091)(7.120747670784988,1.939626164607506)
\psline(8.,4.)(9.,0.)
\psline(8.525466240628814,1.8981350374847408)(8.36903076248038,1.9672576906200958)
\psline(8.525466240628814,1.8981350374847408)(8.630969237519619,2.0327423093799055)
\psline(8.,4.)(12.,0.)
\psline(10.074246212024587,1.9257537879754127)(9.904540584539816,1.9045405845398162)
\psline(10.074246212024587,1.9257537879754127)(10.095459415460182,2.0954594154601844)
\psline(8.,4.)(15.,0.)
\psline(11.59116552992307,1.9479054114725327)(11.433021243321829,1.882787175813198)
\psline(11.59116552992307,1.9479054114725327)(11.566978756678173,2.1172128241868022)
\psline(1.,0.)(9.,-4.)
\psline(5.093914855054991,-2.0469574275274947)(4.939626164607505,-2.120747670784988)
\psline(5.093914855054991,-2.0469574275274947)(5.060373835392494,-1.8792523292150103)
\psline(4.,0.)(9.,-4.)
\psline(6.581991224991518,-2.0655929799932133)(6.415666168580153,-2.105417289274808)
\psline(6.581991224991518,-2.0655929799932133)(6.584333831419847,-1.8945827107251902)
\psline(7.,0.)(9.,-4.)
\psline(8.046957427527495,-2.09391485505499)(7.879252329215013,-2.060373835392493)
\psline(8.046957427527495,-2.09391485505499)(8.120747670784988,-1.9396261646075041)
\psline(10.,0.)(9.,-4.)
\psline(9.474533759371186,-2.1018649625152594)(9.369030762480381,-1.9672576906200951)
\psline(9.474533759371186,-2.1018649625152594)(9.63096923751962,-2.0327423093799046)
\psline(13.,0.)(9.,-4.)
\psline(10.925753787975413,-2.074246212024587)(10.904540584539818,-1.9045405845398151)
\psline(10.925753787975413,-2.074246212024587)(11.095459415460184,-2.0954594154601835)
\psline(16.,0.)(9.,-4.)
\psline(12.40883447007693,-2.052094588527466)(12.433021243321829,-1.8827871758131964)
\psline(12.40883447007693,-2.052094588527466)(12.566978756678173,-2.117212824186801)
\psline(15.,0.)(16.,0.)
\psline(15.605,0.)(15.5,-0.135)
\psline(15.605,0.)(15.5,0.135)
\psline(16.,0.)(17.,0.)
\psline(-1.,0.)(0.,0.)
\begin{scriptsize}
\psdots[dotsize=3pt 0,dotsize=3pt,dotstyle=*,linecolor=black](0.,0.)
\psdots[dotsize=3pt,dotstyle=*,linecolor=black](1.,0.)
\rput[bl](0.3,-0.5){$x_{0}$}
\psdots[dotsize=3pt,dotstyle=*,linecolor=black](2.,0.)
\rput[bl](1.3,0.2){$x_0$}
\psdots[dotsize=3pt,dotstyle=*,linecolor=black](3.,0.)
\rput[bl](2.3,0.2){$x_1$}
\psdots[dotsize=3pt,dotstyle=*,linecolor=black](4.,0.)
\rput[bl](3.3,-0.5){$x_1$}
\psdots[dotsize=3pt,dotstyle=*,linecolor=black](5.,0.)
\rput[bl](4.5,-0.4){$x_1$}
\psdots[dotsize=3pt,dotstyle=*,linecolor=black](6.,0.)

\rput[bl](-0.3,-0.4){\black{$u_0$}}
\rput[bl](0.8,-0.4){\black{$v_0$}}
\rput[bl](1.8,-0.4){\black{$w_0$}}
\rput[bl](2.8,-0.4){\black{$u_1$}}
\rput[bl](3.8,0.1){\black{$v_1$}}
\rput[bl](4.9,0.1){\black{$w_1$}}
\rput[bl](5.9,-0.4){\black{$u_2$}}
\rput[bl](6.6,-0.4){\black{$v_2$}}
\rput[bl](7.8,-0.4){\black{$w_2$}}
\rput[bl](9.0,0.1){\black{$u_3$}}
\rput[bl](9.8,0.1){\black{$v_3$}}
\rput[bl](10.7,0.1){\black{$w_3$}}
\rput[bl](12.8,0.1){\black{$v_4$}}
\rput[bl](13.8,0.1){\black{$w_4$}}
\rput[bl](14.8,-0.4){\black{$u_5$}}
\rput[bl](15.8,0.1){\black{$v_5$}}
\rput[bl](5.3,-0.4){$x_2$}
\psdots[dotsize=3pt,dotstyle=*,linecolor=black](7.,0.)
\rput[bl](6.4,0.2){$x_{2}$}
\psdots[dotsize=3pt,dotstyle=*,linecolor=black](8.,0.)
\rput[bl](7.3,0.2){$x_2$}
\psdots[dotsize=3pt,dotstyle=*,linecolor=black](9.,0.)
\rput[bl](8.3,0.2){$x_3$}
\psdots[dotsize=3pt,dotstyle=*,linecolor=black](10.,0.)
\rput[bl](9.3,-0.5){$x_3$}
\psdots[dotsize=3pt,dotstyle=*,linecolor=black](11.,0.)
\rput[bl](10.3,-0.5){$x_3$}
\psdots[dotsize=3pt,dotstyle=*,linecolor=black](12.,0.)
\rput[bl](11.8,-0.4){\black{$u_4$}}
\rput[bl](11.3,-0.5){$x_4$}
\psdots[dotsize=3pt,dotstyle=*,linecolor=black](13.,0.)
\rput[bl](12.3,0.2){$x_4$}
\psdots[dotsize=3pt,dotstyle=*,linecolor=black](14.,0.)
\rput[bl](13.4,-0.5){$x_4$}
\psdots[dotsize=3pt,dotstyle=*,linecolor=black](15.,0.)
\rput[bl](14.4,-0.5){$x_5$}
\psdots[dotsize=3pt,dotstyle=*,linecolor=black](8.,4.)
\rput[bl](3.1,2.1){$x_{n-1}$}
\rput[bl](4.9,2.1){$x_0$}
\rput[bl](6.5,2.1){$x_{1}$}
\rput[bl](8.6,2.1){$x_2$}
\rput[bl](10.1,2.1){$x_3$}
\rput[bl](11.6,2.1){$x_4$}
\psdots[dotsize=3pt,dotstyle=*,linecolor=black](9.,-4.)
\rput[bl](7.8,4.2){\black{$N$}}
\rput[bl](8.8,-4.6){\black{$S$}}
\rput[bl](4.1,-2.1){$x_1$}
\rput[bl](5.8,-2.1){$x_2$}
\rput[bl](7.3,-2.1){$x_3$}
\rput[bl](9.7,-2.1){$x_4$}
\rput[bl](11.3,-2.1){$x_5$}
\psdots[dotsize=3pt,dotstyle=*,linecolor=black](16.,0.)
\rput[bl](13.0,-2.1){$x_6$}
\rput[bl](15.2,0.2){$x_5$}
\rput[bl](-2.0,-0.1){\black{$\cdots$}}
\rput[bl](17.5,-0.1){\black{$\cdots$}}
\rput[bl](3.08,1.0){\black{$F_{n-1}^-$}}
\rput[bl](5.22,1.0){\black{$F_{0}^-$}}
\rput[bl](7.50,1.0){\black{$F_1^-$}}
\rput[bl](9.44,1.0){\black{$F_2^-$}}
\rput[bl](11.5,1.0){\black{$F_3^-$}}
\rput[bl](4.08,-1.4){\black{$F_0^+$}}
\rput[bl](6.08,-1.4){\black{$F_1^+$}}
\rput[bl](8.44,-1.4){\black{$F_2^+$}}
\rput[bl](10.48,-1.4){\black{$F_3^+$}}
\rput[bl](12.8,-1.4){\black{$F_4^+$}}
\end{scriptsize}
\end{pspicture*}

\caption{Face pairing polyhedron for the presentation $\mathcal{G}_n(x_0x_1^2x_2x_1^{-1})$ of the Sieradski group~$S(2,2m)$\label{fig:AltSieradskiPolyhedron}.}
\end{figure}
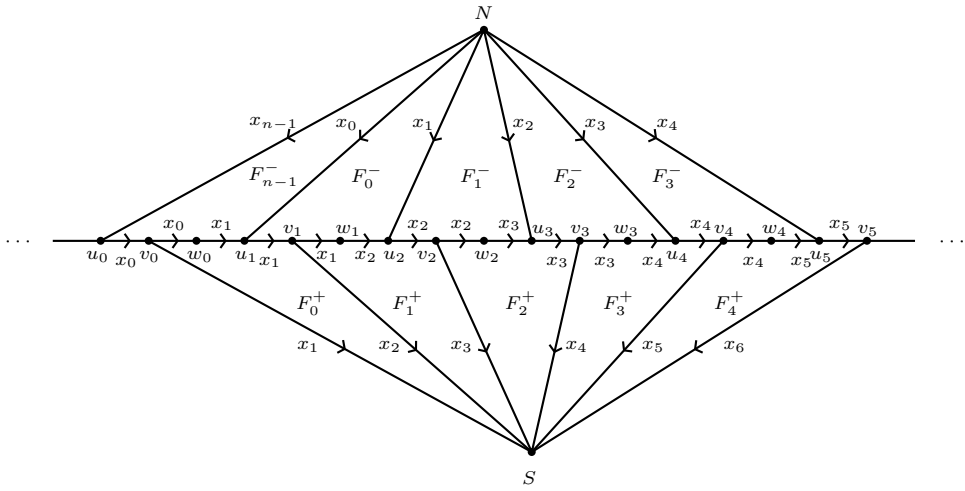

Observe that the polyhedron given in Figure~\ref{fig:AltFibonacciPolyhedron} for the presentation \linebreak $\mathcal{G}_m(x_0^{-1}x_1^2x_2^{-1}x_1)$  of $F(2,2m)$ appears to be of a fundamentally different nature to the polyhedron for the presentation $\mathcal{F}(2,2m)=\mathcal{G}_m(x_0x_1x_2^{-1})$ given in~\cite[Figure~1]{HMK}: the former consists of $2m$ pentagons whereas the latter consists of $4m$ triangles. Similarly, the polyhedron given in Figure~\ref{fig:AltSieradskiPolyhedron} for the presentation $\mathcal{G}_n(x_0x_1^2x_2x_1^{-1})$ of $S(2,2m)$
appears to be of a fundamentally different nature to the polyhedron for the presentation $\mathcal{S}(2,2m)=\mathcal{G}_{2m}(x_0x_2x_1^{-1})$ given in~\cite[Figure~8]{Sieradski} or~\cite[Figure~1]{CHK}: again, the former consists of $2m$ pentagons, while the latter consists of $4m$ triangles.

The alternative Fibonacci presentation $\mathcal{G}_n(x_0^{-1}x_{1}^{2}x_{2}^{-1}x_{1})$ is already known to be a 3-manifold spine, as it appears in line~6 of~\cite[Table~1]{Dunwoody}; however, our proof gives an explicit construction of the face-pairing polyhedron. As observed in~\cite[Example~1.2]{KimVesnin97} the polynomial $1+t-t^2$ associated to the original Fibonacci presentation $\mathcal{G}_{2m}(x_0x_1x_2^{-1})$ is not an Alexander polynomial, but the polynomial $1-3t+t^2$ associated to the alternative Fibonacci presentation $\mathcal{G}_{m}(x_0^{-1}x_{1}^{2}x_{2}^{-1}x_{1})$ is the Alexander polynomial of the figure eight knot. For the Sieradski presentations, the polynomial $1-t+t^2$ for the presentation $\mathcal{G}_{2m}(x_0x_2x_1^{-1})$ is the Alexander polynomial of the trefoil knot, but the polynomial $1+t+t^2$ for the presentation $\mathcal{G}_m(x_0x_1^2x_2x_1^{-1})$ is not an Alexander polynomial~\cite{BurdeZieschang}.

\section*{Acknowledgements}

We thank Bill Bogley for helpful comments on a draft of this article.

\bigskip

  \textsc{Department of Mathematics and Maxwell Institute for Mathematical Sciences, Heriot-Watt University, Edinburgh
EH14 4AS, UK.}\par\nopagebreak
  \textit{E-mail address}, \texttt{J.Howie@hw.ac.uk}

  \medskip

  \textsc{Department of Mathematical Sciences, University of Essex, Wivenhoe Park, Colchester, Essex CO4 3SQ, UK.}\par\nopagebreak
  \textit{E-mail address}, \texttt{Gerald.Williams@essex.ac.uk}

\end{document}